\newcommand\reallywidehat[1]{%
	\savestack{\tmpbox}{\stretchto{%
			\scaleto{%
				\scalerel*[\widthof{\ensuremath{#1}}]{\kern-.6pt\bigwedge\kern-.6pt}%
				{\rule[-\textheight/2]{1ex}{\textheight}}
			}{\textheight}%
		}{0.5ex}}%
	\stackon[1pt]{#1}{\tmpbox}%
}
\renewcommand*{\backref}[1]{}
\renewcommand*{\backrefalt}[4]{%
	\ifcase #1 (Not cited.)%
	\or        (Cited on page~#2.)%
	\else      (Cited on pages~#2.)%
	\fi}
\DeclareFontFamily{U}{MnSymbolC}{}
\DeclareSymbolFont{MnSyC}{U}{MnSymbolC}{m}{n}
\DeclareFontShape{U}{MnSymbolC}{m}{n}{
    <-6>  MnSymbolC5
   <6-7>  MnSymbolC6
   <7-8>  MnSymbolC7
   <8-9>  MnSymbolC8
   <9-10> MnSymbolC9
  <10-12> MnSymbolC10
  <12->   MnSymbolC12}{}
\DeclareMathSymbol{\intprod}{\mathbin}{MnSyC}{'270}
\newcommand{\K}{K\"ahler}
\DeclareMathSymbol{\cs} {\mathord}{AMSb}{"60}
\DeclareMathSymbol{\intprod}{\mathbin}{MnSyC}{'270}
\numberwithin{equation}{section}
\def\eqref#1{(\ref{#1})}
\newcommand{\C}{{\mathbb C}}
\newcommand{\R}{{\mathbb R}}
\newcommand{\6}{\partial}
\newcommand{\del}{\partial}
\def\1{\sqrt{-1}\:}
\newcommand{\restrict}[1]{{\big|_{{\phantom{|}\!\!}_{#1}}}}
\newcommand{\cntrct}                
{\hspace{2pt}\raisebox{1pt}{\text{$\lrcorner$}}\hspace{2pt}}
\renewcommand{\dim}{\operatorname{dim}}
\newcommand{\Mrond}{\mathcal{M}}
\newcommand{\Urond}{\mathcal{U}}
\newcounter{Mycounter}[section]
\newcounter{lemma}[section]
\newcounter{claim}[section]
\newcounter{sublemma}[section]
\newcounter{corollary}[section]
\newcounter{theorem}[section]
\newcounter{conjecture}[section]
\newcounter{proposition}[section]
\newcounter{definition}[section]
\newcounter{example}[section]
\newcounter{remark}[section]
\newcounter{problem}[section]
\newcounter{question}[section]
\DeclareRobustCommand*{\mfaktor}[3][]
{
   { \mathpalette{\mfaktor@impl@}{{#1}{#2}{#3}} }
}
\newcommand*{\mfaktor@impl@}[2]{\mfaktor@impl#1#2}
\newcommand*{\mfaktor@impl}[4]{
   \settoheight{\faktor@zaehlerhoehe}{\ensuremath{#1#2{#3}}}%
   \settoheight{\faktor@nennerhoehe}{\ensuremath{#1#2{#4}}}%
      \raisebox{-0.5\faktor@zaehlerhoehe}{\ensuremath{#1#2{#3}}}%
      \mkern-4mu\diagdown\mkern-5mu%
      \raisebox{0.5\faktor@nennerhoehe}{\ensuremath{#1#2{#4}}}%
}
\tikzset{join/.code=\tikzset{after node path={%
			\ifx\tikzchainprevious\pgfutil@empty\else(\tikzchainprevious)%
			edge[every join]#1(\tikzchaincurrent)\fi}}}
\tikzset{>=stealth',every on chain/.append style={join},
	every join/.style={->}}
\newtheorem*{rep@theorem}{\rep@title}
\newcommand{\newreptheorem}[2]{%
	\newenvironment{rep#1}[1]{%
		\def\rep@title{\ref{##1}}%
		\begin{rep@theorem}}%
		{\end{rep@theorem}}}
\newtheoremstyle{named}{}{}{\itshape}{}{\bfseries}{.}{.5em}{\thmnote{#3's }#1}
\theoremstyle{named}
\begin{document}
	
	\newpage
	
	\title[Special Hermitian metrics]{Special Hermitian metrics}

        \author{Cristian Ciulic\u a}
	\address{Cristian Ciulic\u a  \newline
		\textsc{\indent University of Bucharest, Faculty of Mathematics and Computer Science\newline 
			\indent 14 Academiei Str., Bucharest, Romania}}
	\email{cristiciulica@yahoo.com}
	
	\date{\today\\ \noindent {\bf 2010 Mathematics Subject Classification:} {53C55}\\
	\noindent{\bf Keywords:} Blow-up, deformation, Hermitian metric, pluriclosed, SKT}

	\begin{abstract}
		We study the stability at blow-up and deformations of a class of Hermitian metrics whose fundamental two-form $\omega$ satisfies the condition $\6 \bar \6 \omega^k=0$, for any $k$ between 1 and $n-1$ (where $n$ is the complex dimension of the manifold). We are motivated by the existence of compact complex manifold supporting such metrics.
	\end{abstract}
	
	\maketitle
	
	\hypersetup{linkcolor=blue}

	\section{Introduction}
	Among all complex manifolds, those admitting Kähler structures occupy a privileged place: they lie at the intersection of Riemannian, symplectic, and complex geometry, and they often lend themselves to algebraic techniques, as they are frequently projective in the compact case. On the other hand, many interesting compact complex manifolds do not admit K\"ahler metrics. Therefore,  various conditions have been introduced that relax Kählerianity while still maintaining enough control to derive meaningful results. Some prominent examples of non-K\"ahler metrics are pluriclosed  (or SKT), balanced, locally conformally Kähler, and astheno-Kähler. More details about each of these types of metrics, as well as the motivation for their introduction, can be found in \cite{b87}, \cite{m82}, \cite{ov24}, \cite{jy}, \cite{fww13}.

    The question of when does a complex manifold admit metrics in several of the aforementioned classes has been intensively studied. In this note, we focus on metrics with Hermitian $2$- form $\omega$ satisfying $\6 \bar \6 \omega^k=0$, for any $k$ between 1 and $n-1$ (where $n=\dim_\C X$). We call this metrics {\em special} (by lack of a more suggestive name). Such a metric is, in particular,  both pluriclosed ($k=1$) and astheno-Kähler ($k=n-2$). 
    
    Our motivation comes from the study of Endo-Pajitnov manifolds (\cite{pajitnov1}), which are non-K\"ahler compact manifolds, generalizations in higher dimensions of the Inoue surfaces of type $S^+$. In  the recent paper \cite{com}, we explicitly wrote a Hermitian metric which is special in the above sense on an Endo-Pajitnov manifold.  
    
    On the other hand, in  \cite{fgv19} compact complex manifolds carrying both astheno-K\"ahler and SKT metrics are constructed. Note that, in general, metrics which belong simultaneously to two different classes of non-K\"ahler metrics do not exist: for example, a metric which is both astheno-K\"ahler and balanced, or both SKT and balanced should be K\"ahler (\cite{fv}, \cite{_Alexandrov_Ivanov_}).

    In this note, we discuss the stability of special Hermitian metrics at deformations and with respect to blow-up along submanifolds.
    
   Concerning the stability at deformations, we follow and generalize the work of  T. Sferruza  for astheno-Kähler, SKT and balanced metrics (\cite{sfe}, \cite{sfe_2}, \cite{ps}).

    As for the blow-up, we investigated Hermitian metrics satisfying a more general property than being special: for any fixed $k$, we ask $\6 \bar \6 \omega^i=0$ for any $i=\overline{1,k}$. It turns out that this condition is stable with respect to blow-up along submanifolds. Our result then generalizes the ones about the blow-up of SKT  (\cite{ft}) metrics, and complements the ones concerning the blow-up of other non-K\"hler types of metrics (e.g. \cite{vul}, \cite{ovv13} for LCK).
    
    Note that this result contrasts the one in \cite{sfto} where it is proven that a metric satisfying  $\6 \bar \6 \omega^i=0$ for $i=n-2$ {\em and} $i=n-3$ is not preserved by blow-up. 

    The paper is organized as follows. In  Section \ref{not}, we fix the notations that will be used throughout the paper. The next two sections contain the main results; each of them begins with a subsection containing introductory material.

        \section{Conventions and notations}
        \label{not}

        Let $M$ be a differentiable manifold of dimension $2n$ with a structure of complex manifold. A structure of complex manifold is equivalent to an integrable almost complex structure $J$ on $M$, i.e. an endomorphism of the tangent bundle $TM$ such that $J^2=-Id_{TM}$ and the Nijenhuis tensor $N_J$ identically vanishes, that is
        \begin{equation*}
            N_J(X,Y)=[X,Y]-J[JX,Y]-J[X,JY]-[JX,JY]=0
        \end{equation*}
        for every $X,Y\in TM$.
        We will denote the extension of $J$ to the complexification of tangent bundle $TM_\mathbb{C}$ by $J$ as well; the eigenvalues of this operator are $\pm i$. Hence, we will obtain a direct sum decomposition of $TM_\mathbb{C}$, $TM_\mathbb{C}=T^{1,0}M\oplus T^{0,1}M$ in terms of the $\pm i$-eigenspaces of $J$.

        The complex structure acts also on differential forms of degree $k$ by:  $J\alpha=i^{q-p} \alpha$, for any $\alpha \in \Lambda^{p, q}_{\mathbb{C}}M$, or equivalently $$J\eta(X_1, \ldots, X_p) = (-1)^p\eta(JX_1, \ldots, JX_p).$$

        A Riemannian metric $g$ on $(M,J)$ is called Hermitian if $g(JX,JY)=g(X,Y)$, for every $X,Y \in TM$. The fundamental $2-$form $\omega$ of $g$ is defined by $\omega(X,Y):=g(JX,Y)$, for every $X,Y \in TM$.

        We denote by $A^k_\C (M)$ the space of complex-valued $k$-forms on $M$ and by $A^{p,q}(M)$ the space of $(p,q)$-forms with respect to the complex structure $J$.

        We recall the action of the differential operator:
        \[
        d: A^{p,q}(M) \rightarrow A^{p+1, q}(M) \oplus A^{p, q+1}(M)
        \]
        which decomposes as $d=\6 + \bar\6$, where $\6:= \pi^{p+1,q} \circ d$, $\bar\6 := \pi^{p,q+1} \circ d$.

        Let $\pi: E \rightarrow M$ be a rank $r$ holomorphic vector bundle and $A^{p,q}(M,E) := A^{p,q}(M) \otimes_{\mathcal{C}^\infty(M)} E$.

        We define $\bar\6_E$ on $A^{p,q}(M,E)$ as follows: let $\varphi := \sum\limits_i \eta^i \otimes s_i \in A^{p,q}(M,E)$, where $\eta^i \in A^{p,q}(M)$ and $(s_1, \ldots, s_r)$ is the expression of a section $s \in \Gamma(E)$ in a local trivialisation; then we set:
        \[
            \bar\6_E \varphi := \sum\limits_i \bar\6 (\eta^i) \otimes s_i
        \]

        \begin{definition}
        Let $\xi = \alpha \otimes s \in A^{p,q}(E)$ and $\varphi = \bar \eta \otimes z \in A^{0,r}(T^{1,0} M)$. Following \cite{rz} and \cite{sfe}, we define the contraction by $\varphi$ as:
        \begin{align*}
            \iota_\varphi: A^{p,q}(E) &\rightarrow A^{p-1, q+1}(E) \\
            \xi &\mapsto \iota_\varphi(\xi) := (\bar \eta \wedge \iota_Z(\alpha)) \otimes s
        \end{align*}
        \end{definition}

\section{Deformations of special Hermitian metrics}
        
        \subsection{Review of deformation theory}
        \label{sec:prelim}

        In this section, we will recall some definitions and properties related to the deformation of complex structures. We will review the formulas for differential operators on each element of a differentiable family of deformations, and these formulas will be expressed in terms of the operators on $(M,J)$. 
        
        \begin{definition} (\cite[Definition 4.1]{kod_2} \cite[Definition 2.1]{rz})
        \label{def:complex_deformation}
            Let $X$ be a smooth manifold, $B \subset \R^k$ a domain and $\pi: X \rightarrow B$ a differentiable surjection.

            The triple $(\pi, X, B)$ is called a differentiable family of compact complex manifolds of dimension $n$ if:
            \begin{enumerate}
                \item the rank of the Jacobian of $\pi$ is equal to $k$ at every point of $X$,
                \item for any $t \in B$, $X_t:=\pi^{-1}(t)$ is a compact connected submanifold, endowed with a complex structure,
                \item there exists a locally finite cover $\{ \mathcal{U}_j \rvert  j \in I \}$ of $X$ and differentiable complex-valued functions $\xi^1_j, \ldots, \xi_j^n$ defined on $\mathcal{U}_j$ such that for any $t \in B$, the atlas 
                \[ 
                \{ \left(\mathcal{U}_j \cap \pi^{-1}(t), p \mapsto (\xi_j^1(p),\ldots,\xi_j^n(p))\right)\ \rvert\ j \in I, \mathcal{U}_j \cap \pi^{-1}(t) \neq \emptyset\}
                \]
                forms a system of holomorphic coordinates on $X_t$.
            \end{enumerate}
        \end{definition}

        We also recall from \cite{rz} the following definition of the Lie bracket of $T^{1,0}M$-valued forms.

    \begin{definition}
        Let $\varphi \in A^{0,p}(T^{1,0}M)$, $\Psi \in A^{0,q}(T^{1,0}M)$. Locally, we can write:
        \begin{align*}
            \varphi &= \sum_{\bar{j_1}, \ldots,\bar{j_p}, i}\varphi^i_{\bar{j_1}, \ldots, \bar{j_p}}d\bar{z}^{j_1} \wedge \cdots \wedge d\bar{z}^{j_p} \otimes \frac{\6}{\6z_i} \\
            \Psi &= \sum_{\bar{k_1}, \ldots, \bar{k_q}, i}
                \Psi^i_{\bar{k_1}, \ldots, \bar{k_q}} d\bar{z}^{k_1} \wedge \cdots \wedge d\bar{z}^{k_q} \otimes \frac{\6}{\6z_i}
        \end{align*}
        We define:
        \[
        [\varphi, \Psi] := \sum_{i,j=1}^n \left(\varphi^i \wedge \6_i \Psi^j - (-1)^{pq} \Psi^i \wedge \6_i \varphi^j\right) \otimes \frac{\6}{\6z_j} \quad, 
        \]
        where:
        \begin{align*}
        \varphi^j := \sum_{\bar{j_1}, \ldots,\bar{j_p}} \varphi^j_{\bar{j_1}, \ldots, \bar{j_p}}d\bar{z}^{j_1} \wedge \cdots \wedge d\bar{z}^{j_p} \\
        \6_i \varphi^j := \sum_{\bar{j_1}, \ldots, \bar{j_p}} \frac{\6}{\6z_i}\left(\varphi^j_{\bar{j}_1, \ldots, \bar{j}_p} \right) d\bar{z}^{j_1} \wedge \cdots \wedge d\bar{z}^{j_p}
        \end{align*}
        and $\Psi^j, \6_i \Psi^j$ are defined analogously.
        \end{definition}

        In particular, if $\varphi, \Psi \in A^{0,1}(T^{1,0}M)$, then:
        \[
        [\varphi, \Psi] = \sum_{i,j=1}^n
        \left(\varphi^i \wedge \6_i \Psi^j + \Psi^i \wedge \6_i \varphi^j\right) \otimes \frac{\6}{\6 z_j}
        \]

        \begin{definition}
            For $\phi \in A^{0,q}(T^{1,0}M)$. We define the twisted Lie derivative:
            \[
            \mathcal{L}_\phi := (-1)^q d \circ \iota_\phi + \iota_\phi \circ d
            \]
        \end{definition}

        \begin{remark}
        We can decompose $\mathcal{L}_\phi$ as:
        \[
            \mathcal{L}_\phi := \mathcal{L}_\phi^{1,0}+  \mathcal{L}_\phi^{0,1} \quad , 
        \]
        where 
        \begin{align*}
             \mathcal{L}_\phi^{1,0} &= (-1)^q \6 \circ \iota_\phi + \iota_\phi \circ \6 \\
              \mathcal{L}_\phi^{0,1} &= (-1)^q \bar\6 \circ \iota_\phi + \iota_\phi \circ \bar\6
        \end{align*}
        \end{remark}
        \vspace{5 mm}

        The following definition is of central importance.

\begin{definition}
            Let $\phi \in A^{0,1}(T^{1,0}M)$. We define  $e^{\iota_ \phi}:A^*(M) \longrightarrow A^*(M)$ by: 
            \[
                e^{\iota_ \phi} =\sum_{k=0}^\infty \frac{1}{k!} \iota_\phi^k.
            \]
\end{definition}
\begin{proposition}{\rm (\cite[Theorem 3.4]{lry})} 
            Let $\phi \in A^{0,1}(T^{1,0}M)$. We have:
            \[
            e^{-\iota_ \phi} \circ d \circ e^{\iota_\phi}=d - \mathcal{L}_\phi -\iota_{\frac{1}{2}[\phi, \phi]}=d-\mathcal{L}_\phi^{1,0} + \iota_{\bar\6 \phi - \frac{1}{2} [\phi, \phi]}
            \] i.e.:
            \begin{align*}
                e^{\iota_ \phi} \circ \bar\6 \circ e^{\iota_ \phi}
                &= \bar\6 - \mathcal{L}_\phi^{0,1} \\
                e^{-\iota_ \phi} \circ \6 \circ e^{\iota_ \phi}
                &= \6 - \mathcal{L}_\phi^{1,0}
                - \iota_{\frac{1}{2}[\phi,\phi]}
            \end{align*}
        \end{proposition}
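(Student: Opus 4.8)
The plan is to reduce the statement to conjugating $\6$ and $\bar\6$ separately, and to expand each conjugation by the Baker--Campbell--Hausdorff (Hadamard) formula. Because $\phi\in A^{0,1}(T^{1,0}M)$, the contraction $\iota_\phi$ sends $A^{p,q}(M)$ to $A^{p-1,q+1}(M)$, so it is an even derivation of the exterior algebra (its total degree is $0$); consequently, for any operator $B$ the conjugation expands with \emph{ordinary} iterated commutators,
\[
e^{-\iota_\phi}\circ B\circ e^{\iota_\phi}=\sum_{k\ge 0}\frac{1}{k!}\,\underbrace{[\,\cdots[\,[B,\iota_\phi],\iota_\phi]\,\cdots,\iota_\phi]}_{k\ \text{brackets}},\qquad [P,Q]:=PQ-QP.
\]
Since $e^{-\iota_\phi}\circ d\circ e^{\iota_\phi}=e^{-\iota_\phi}\circ\6\circ e^{\iota_\phi}+e^{-\iota_\phi}\circ\bar\6\circ e^{\iota_\phi}$, it suffices to evaluate the two series term by term and to show that each truncates.

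First I would record the three identities that force the collapse. (i) The first-order commutators are exactly the type components of the twisted Lie derivative: taking $q=1$ in the definition of $\mathcal{L}_\phi$ gives $\mathcal{L}_\phi^{1,0}=[\iota_\phi,\6]$ and $\mathcal{L}_\phi^{0,1}=[\iota_\phi,\bar\6]$, whence $[\6,\iota_\phi]=-\mathcal{L}_\phi^{1,0}$ and $[\bar\6,\iota_\phi]=-\mathcal{L}_\phi^{0,1}$. (ii) A short computation in holomorphic coordinates, using that the coordinate field $\partial_i=\6/\6 z^i$ satisfies $\iota_{\partial_i}\bar\6+\bar\6\iota_{\partial_i}=0$, yields the pointwise identity $\mathcal{L}_\phi^{0,1}=-\iota_{\bar\6\phi}$, where $\bar\6\phi\in A^{0,2}(T^{1,0}M)$. (iii) Contractions by $T^{1,0}M$-valued $(0,\bullet)$-forms mutually commute: for $\phi\in A^{0,1}(T^{1,0}M)$ and any $\psi\in A^{0,p}(T^{1,0}M)$ one checks directly that $[\iota_\phi,\iota_\psi]=0$, since each operator only wedges antiholomorphic forms and contracts holomorphic directions, and the two sign changes that arise cancel.

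With (i)--(iii) the $\bar\6$-series collapses immediately: its quadratic term is $[[\bar\6,\iota_\phi],\iota_\phi]=[-\mathcal{L}_\phi^{0,1},\iota_\phi]=[\iota_{\bar\6\phi},\iota_\phi]=0$ by (ii) and (iii), and all higher terms vanish for the same reason, giving $e^{-\iota_\phi}\circ\bar\6\circ e^{\iota_\phi}=\bar\6-\mathcal{L}_\phi^{0,1}$. For the $\6$-series the genuinely new contribution is the quadratic term, and this is exactly where $[\phi,\phi]$ enters; the key identity to establish is
\[
[\iota_\phi,\mathcal{L}_\phi^{1,0}]=-\iota_{[\phi,\phi]}.
\]
Granting it, the quadratic term equals $\tfrac12[[\6,\iota_\phi],\iota_\phi]=\tfrac12[\iota_\phi,\mathcal{L}_\phi^{1,0}]=-\iota_{\frac12[\phi,\phi]}$, the cubic term is $\tfrac16[\,[\iota_\phi,\mathcal{L}_\phi^{1,0}],\iota_\phi]=-\tfrac16[\iota_{[\phi,\phi]},\iota_\phi]=0$ by (iii), and everything higher vanishes, so $e^{-\iota_\phi}\circ\6\circ e^{\iota_\phi}=\6-\mathcal{L}_\phi^{1,0}-\iota_{\frac12[\phi,\phi]}$. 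Adding the two formulas gives $e^{-\iota_\phi}\circ d\circ e^{\iota_\phi}=d-\mathcal{L}_\phi-\iota_{\frac12[\phi,\phi]}$, and inserting $\mathcal{L}_\phi=\mathcal{L}_\phi^{1,0}+\mathcal{L}_\phi^{0,1}$ with $\mathcal{L}_\phi^{0,1}=-\iota_{\bar\6\phi}$ from (ii) rewrites this as $d-\mathcal{L}_\phi^{1,0}+\iota_{\bar\6\phi-\frac12[\phi,\phi]}$, the second form in the statement.

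The main obstacle is the quadratic identity $[\iota_\phi,\mathcal{L}_\phi^{1,0}]=-\iota_{[\phi,\phi]}$. Writing $\phi=\phi^i_{\bar j}\,d\bar z^j\otimes\partial_i$ locally and expanding $\mathcal{L}_\phi^{1,0}=[\iota_\phi,\6]=\sum_i\phi^i\wedge\mathcal{L}_{\partial_i}-\sum_i\6\phi^i\wedge\iota_{\partial_i}$ (with $\mathcal{L}_{\partial_i}$ the ordinary Lie derivative, i.e. coefficient-wise $\6/\6 z^i$), the commutator with $\iota_\phi$ produces three families of terms: genuinely second-order pieces in which $\6$ differentiates the underlying form twice, mixed pieces in which one derivative falls on the coefficients $\phi^i_{\bar j}$, and purely algebraic pieces. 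The heart of the argument is to show that the second-order pieces cancel---this is where the antisymmetry of $d\bar z^j\wedge d\bar z^l$ together with the antisymmetry of the double contraction $\iota_{\partial_i}\iota_{\partial_k}$ is used---and that the surviving coefficient-derivative terms reassemble into $\sum_{i,j}\phi^i\wedge\6_i\phi^j\otimes\partial_j=\tfrac12[\phi,\phi]$ contracted against the form, matching the bracket from its definition. Tracking the signs through this cancellation is the only delicate point; the verifications in (i), (ii), (iii) are then routine coordinate computations.
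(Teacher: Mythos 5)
Your proof is correct, but note that there is no internal argument to compare it with: the paper does not prove this proposition at all, it imports it verbatim from \cite[Theorem 3.4]{lry}. Your proposal is, in essence, a self-contained reconstruction of the Liu--Rao--Yang argument: expand the conjugation by the degree-zero derivation $\iota_\phi$ as a Hadamard series of iterated commutators and show that the series terminates. All of your ingredients check out against the paper's conventions. Identity (i) is the $q=1$ case of the paper's definitions of $\mathcal{L}_\phi^{1,0}$ and $\mathcal{L}_\phi^{0,1}$; (ii) follows from $\iota_{\partial_i}\circ\bar\6+\bar\6\circ\iota_{\partial_i}=0$ exactly as you say; (iii) is correctly stated with one argument in $A^{0,1}(T^{1,0}M)$ (for two contractions of even antiholomorphic degree commutativity would fail, but you never need that case). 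The Hadamard expansion is a finite algebraic identity here because $\iota_\phi^{p+1}=0$ on $A^{p,q}(M)$, so no convergence issue arises. Your key quadratic identity $[\iota_\phi,\mathcal{L}_\phi^{1,0}]=-\iota_{[\phi,\phi]}$ is also correct with the paper's normalization of the bracket: writing $\mathcal{L}_\phi^{1,0}=\sum_i\phi^i\wedge\mathcal{L}_{\partial_i}-\sum_i\6\phi^i\wedge\iota_{\partial_i}$ and commuting with $\iota_\phi=\sum_j\phi^j\wedge\iota_{\partial_j}$, the second-order terms cancel by antisymmetry of $\phi^i\wedge\phi^j$ combined with $[\mathcal{L}_{\partial_i},\iota_{\partial_j}]=0$, the purely algebraic terms cancel by antisymmetry of $\iota_{\partial_i}\iota_{\partial_j}$, and the two surviving families of coefficient-derivative terms add up to $-2\sum_{i,j}\phi^i\wedge\6_i\phi^j\wedge\iota_{\partial_j}$, which is precisely $-\iota_{[\phi,\phi]}$ since the paper's bracket gives $[\phi,\phi]=2\sum_{i,j}\phi^i\wedge\6_i\phi^j\otimes\frac{\6}{\6 z^j}$; the factor of $2$ matches. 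A further point in your favor: your formulation silently corrects a typo in the paper's statement, whose first displayed consequence should read $e^{-\iota_\phi}\circ\bar\6\circ e^{\iota_\phi}=\bar\6-\mathcal{L}_\phi^{0,1}$ (the left exponent is missing its minus sign). What your route buys is a proof carried out entirely inside the paper's own formalism; what the paper's citation buys is brevity, since the proposition is only used downstream as a black box.
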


        \vspace{5 mm}

        In what follows, we consider $\pi: \Mrond \rightarrow B_r$ a differentiable family of compact complex manifolds of complex dimension $n$, where $B_r$ is the unit ball in $\R^k$. More detailes about this description can be found in \cite{kod}.

        For simplification, we assume from now on that $k=1$. We denote by $M_0 := \pi^{-1}(0)$, $M_t := \pi^{-1}(t)$.

        We fix an open cover $\{ \Urond_j \}_{j \in I}$ such that $\Urond_j =
        \{ 
        \left(\xi_j, t 
        \right) = ( \xi_j^1, \ldots, \xi_j^n, t )\rvert \left\lvert \left \lvert \xi_j \right\lvert \right\rvert < 1, |t| < r
        \}$
        and ${\pi(\xi_j, t) = t}$ and ${\xi_j^\alpha = f_{j,k}^\alpha (\xi_k, t)}$ on $\Urond_j \cap \Urond_k$ where $f_{j,k}$ is holomorphic in $\xi_k$ and differentiable in $t$.

        Then by a classical result of Ehresmann (\cite {kod}), $\Mrond \stackrel{\textrm{diff}}{\simeq} M \times B_r$, where $M$ is the differentiable manifold underlying the complex manifold $M_0$. Via this diffeomorphism, we have $\Urond_j \stackrel{\textrm{diff}}{\simeq} U_j \times B_r$, where ${U_j = \{ \xi_j \rvert \left\lvert \left \lvert \xi_j \right\lvert \right\rvert < 1 \}}$. 
        
        For $x \in M$, $\xi_j^\alpha = \xi_j^\alpha (x, t)$ is a differentiable function of $(x,t)$. If $z$ is a holomorphic coordinate on $M_0 = M$, then for $t=0$, $\xi_j^\alpha(z,0)$ is holomorphic in $z$, while for $t \neq 0$, $\xi_j^\alpha(z,t)$ is merely differentiable, since the holomorphic coordinate $z$ on $M_0$ is no longer necessarily holomorphic on $M_t$ when $t \neq 0$.

        Starting from a compact complex manifold $(M,J) $ it is known (\cite{kod}) that we can construct families of deformations $(M_t)_{t \in B}$ of $(M,J)$, where $M_t=(M, J_t)$, for every $t \in B$. Here $J_t$ is is an integrable complex structure on the differentiable manifold $M$ parameterized by a $(0,1)-$vector form $\varphi(t)$ on $(M,J)$. We proceed to explain how to characterise the complex structures on each $M_t$. First, we remark: 

        \begin{remark}
        	We have the following formulae:
         \begin{enumerate}
             \item 
             \begin{align*}
                 \bar\6 \xi_j^\alpha(z,t) &=
                 \bar\6 \left( f_{j,k}^\alpha \left(\xi_k(z,t), t
                     \right) 
                 \right) = 
                 \sum_\beta \frac{\6}{\6 \bar\6z^\beta}
                 \left(f_{j,k}^\alpha 
                    \left(\xi_k(z,t), t
                    \right)
                 \right) d \bar z ^ \beta \\
                 &=
                 \sum_\beta
                 \left(\sum_\gamma
                    \left(\frac{\6 f_{j,k}^\alpha}{\6 \xi_k^\gamma}\cdot
                        \frac{\6\xi_k^\gamma}{\6 \bar z^\beta}+\frac{\6 f_{j,k}^\alpha}{\6 \bar\xi_k^\gamma}\cdot
                        \frac{\6\bar\xi_k^\gamma}{\6 \bar z^\beta}
                    \right)
                 \right) d \bar z ^ \beta \\
                 &=\sum_\gamma\left(\frac{\6 f_{j,k}^\alpha}{\6 \xi_k^\gamma}\left( \sum_\beta \frac{\6\xi_k^\gamma}{\6 \bar z^\beta}d \bar z ^ \beta
                        \right)
                 \right)
                 =\sum_\gamma\frac{\6 f_{j,k}^\alpha}{\6 \xi_k^\gamma}\bar\6 \xi_k^\gamma(z,t) \\
                &= 
                \sum_\beta \frac{\6 f_{j,k}^\alpha}{\6 \xi_k^\beta}\bar\6 \xi_k^\beta
             \end{align*}
        \item 
        \begin{align*}
            \frac{\6}{\6 z_\lambda} \left( \xi_j^\alpha(z,t)\right)
            =
            \sum_\beta
                \frac{\6 f_{j,k}^\alpha}{\6 \xi_k^\beta}
                \frac{\6}{\6 z_\lambda} \left(
                    \xi_k^\beta(z,t)
                \right)
        \end{align*}
         \end{enumerate}
        \end{remark}

        On $M_0$, $(z^1, \ldots, z^n)$ are local holomorphic coordinates, and so are ${(\xi_j^1(z,0), \ldots, \xi_j^n(z,0)}$. Hence:
        \[
        \det \left( \frac{\6 \xi_j^\alpha(z,0)}{\6z^\lambda} \right)_{j, \lambda} \neq 0 
        \]
        which implies that for small $|t|$:
        \[
        \det \left( \frac{\6 \xi_j^\alpha(z,t)}{\6z^\lambda} \right)_{j, \lambda} \neq 0 
        \]

        We denote by:
        \[
            A_{j,\alpha}^\lambda :=
            \left(\left(\frac{\6\xi_j^\alpha}{\6 z_\lambda} \right)_{j, \lambda} 
            \right)^{-1}
        \]
        and consider the local $1$-form
        \[
        \varphi_j^\lambda(z,t):=\sum_\alpha A^j_{j,\alpha} \bar\6 \xi_j^\alpha(z,t)
        \]

        \begin{claim}{\rm (\cite{kod})}
        $\varphi_j^\lambda$ is well-defined (i.e. does not depend on $\xi_j$) and is thus a global $T^{1,0}M$-valued $(0,1)$ form, denoted by:
        \[
        \varphi(t) \in A^{0,1}(T^{1,0}M)
        \]
        \end{claim}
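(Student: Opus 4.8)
The plan is to prove well-definedness by a direct computation on the overlaps $\Urond_j \cap \Urond_k$, exploiting that the transition maps $f_{j,k}$ are holomorphic in the fibre coordinates. The analytic content is already packaged in the preceding Remark, so the task reduces to tracking how the two building blocks of $\varphi_j^\lambda$ — the inverse Jacobian $(A^\lambda_{j,\alpha})$ and the forms $\bar\6\xi_j^\alpha$ — change when passing from the chart $\Urond_k$ to the chart $\Urond_j$, and checking that the ambiguity cancels.

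First I would record the two transformation laws. On $\Urond_j\cap\Urond_k$ set $F^\alpha_\beta := \frac{\6 f_{j,k}^\alpha}{\6 \xi_k^\beta}$, the Jacobian of the transition map, which is holomorphic in $\xi_k$. Part (1) of the Remark gives
\[
\bar\6 \xi_j^\alpha = \sum_\beta F^\alpha_\beta\,\bar\6 \xi_k^\beta ,
\]
while part (2) gives
\[
\frac{\6 \xi_j^\alpha}{\6 z^\lambda} = \sum_\beta F^\alpha_\beta\,\frac{\6 \xi_k^\beta}{\6 z^\lambda}.
\]
Both identities hold precisely because $f_{j,k}$ is holomorphic in $\xi_k$, so that the $\tfrac{\6}{\6\bar\xi_k}$-terms in the chain rule vanish. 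Writing $B_j := \big(\tfrac{\6\xi_j^\alpha}{\6 z^\lambda}\big)_{\alpha,\lambda}$ and $A_j := B_j^{-1} = (A^\lambda_{j,\alpha})$, the second identity reads $B_j = F\,B_k$ in matrix form, hence $A_j = B_k^{-1}F^{-1} = A_k\,F^{-1}$, i.e. $A^\lambda_{j,\alpha} = \sum_\gamma A^\lambda_{k,\gamma}(F^{-1})^\gamma_\alpha$.

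The computation then collapses immediately. Substituting the two laws into the definition of $\varphi_j^\lambda$,
\[
\varphi_j^\lambda = \sum_\alpha A^\lambda_{j,\alpha}\,\bar\6\xi_j^\alpha = \sum_\alpha A^\lambda_{j,\alpha}\sum_\beta F^\alpha_\beta\,\bar\6\xi_k^\beta = \sum_\beta\Big(\sum_\alpha A^\lambda_{j,\alpha}F^\alpha_\beta\Big)\bar\6\xi_k^\beta,
\]
and since $\sum_\alpha A^\lambda_{j,\alpha}F^\alpha_\beta = \sum_{\alpha,\gamma}A^\lambda_{k,\gamma}(F^{-1})^\gamma_\alpha F^\alpha_\beta = \sum_\gamma A^\lambda_{k,\gamma}\delta^\gamma_\beta = A^\lambda_{k,\beta}$, we obtain $\varphi_j^\lambda = \sum_\beta A^\lambda_{k,\beta}\,\bar\6\xi_k^\beta = \varphi_k^\lambda$ on the overlap. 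Thus each $\varphi^\lambda$ is independent of the chosen fibre chart and glues to a global $(0,1)$-form, and $\varphi(t) := \sum_\lambda \varphi^\lambda\otimes\frac{\6}{\6 z^\lambda}$ is defined globally.

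I expect the $F$/$F^{-1}$ cancellation above to be the only substantive step, and it is mere bookkeeping of the cocycle of inverse Jacobians; there is no genuine obstacle, since the holomorphicity of the $f_{j,k}$ — the one piece of real input — is already absorbed into the Remark's two identities. For completeness one should also observe that $\varphi$ is a $T^{1,0}M$-valued form rather than merely a tuple of $(0,1)$-forms: under a change of the base coordinate $z$ the same chain rule shows that the components $\varphi^\lambda$ transform contravariantly, like $\frac{\6}{\6 z^\lambda}$, while the factors $\bar\6\xi$ transform as $(0,1)$-forms, so that $\sum_\lambda \varphi^\lambda\otimes\frac{\6}{\6 z^\lambda}$ is genuinely coordinate-independent, giving the asserted element of $A^{0,1}(T^{1,0}M)$.
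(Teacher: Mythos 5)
Your proof is correct and is exactly the argument the paper intends: the claim is quoted from Kodaira without proof, but the two chain-rule identities in the preceding Remark (valid because $f_{j,k}$ is holomorphic in $\xi_k$) are precisely the inputs, and your cancellation $A_j = A_k F^{-1}$, $\bar\6\xi_j = F\,\bar\6\xi_k$ is the standard computation establishing chart-independence. Your closing observation that the tuple $(\varphi^\lambda)$ transforms contravariantly under change of the base coordinate $z$, so that $\sum_\lambda \varphi^\lambda \otimes \frac{\6}{\6 z^\lambda}$ is a genuine element of $A^{0,1}(T^{1,0}M)$, is a worthwhile detail that the paper leaves implicit.
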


        Thus, the explicit form of $\varphi(t)$ is
        \[
        \varphi(t) = 
        \left(\frac{\6}{\6z} \right)^T
        \left(\frac{\6 \xi}{\6 z}\right)^{-1}\bar\6 \xi
        \]

        Locally, $\varphi(t)$ can be described as:
        \[
        \varphi(t) = \varphi_{\bar j}^i d\bar z^j \otimes \frac{\6}{\6z^i}
        \]

        \begin{claim} {\rm (\cite{rz})}
        We have:
        \[
        \varphi_{\bar j}^i = \left(\left(\frac{\6 \xi}{\6 z}\right)^{-1}\left( \frac{\6 \xi}{\6z} \right)^T \right)^i_{\bar j}
        \]
        \end{claim}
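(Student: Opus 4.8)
The plan is to establish the formula by a direct computation of the components of $\varphi(t)$ in a fixed chart, starting from the explicit matrix expression just obtained. Writing $A:=\left(\frac{\6\xi}{\6z}\right)^{-1}$ for the inverse holomorphic Jacobian, the definition of $\varphi(t)$ together with its explicit form shows that its $T^{1,0}M$-component along $\frac{\6}{\6z^i}$ is the $(0,1)$-form $\varphi^i=\sum_\alpha A^i_\alpha\,\bar\6\xi^\alpha$, in agreement with the local expression $\varphi^\lambda=\sum_\alpha A^\lambda_{\alpha}\,\bar\6\xi^\alpha$. By the preceding (Kodaira) claim this object is already known to be globally well defined and independent of the chart $\Urond_j$, so it suffices to read off its coefficients in one coordinate system.

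First I would expand each $\bar\6\xi^\alpha$ in the antiholomorphic coframe. Since for $t\neq 0$ the function $\xi^\alpha=\xi^\alpha(z,t)$ is only differentiable in $z$, we have $\bar\6\xi^\alpha=\sum_j\frac{\6\xi^\alpha}{\6\bar z^j}\,d\bar z^j$. Substituting into $\varphi^i$ and interchanging the finite sums yields
\[
\varphi^i=\sum_j\Bigl(\sum_\alpha A^i_\alpha\,\frac{\6\xi^\alpha}{\6\bar z^j}\Bigr)d\bar z^j .
\]
Comparing with the local description $\varphi(t)=\varphi^i_{\bar j}\,d\bar z^j\otimes\frac{\6}{\6z^i}$ and matching the coefficient of $d\bar z^j\otimes\frac{\6}{\6z^i}$ gives
\[
\varphi^i_{\bar j}=\sum_\alpha A^i_\alpha\,\frac{\6\xi^\alpha}{\6\bar z^j}
=\sum_\alpha\Bigl[\bigl(\tfrac{\6\xi}{\6z}\bigr)^{-1}\Bigr]^i_\alpha\,\frac{\6\xi^\alpha}{\6\bar z^j},
\]
which is exactly the $(i,\bar j)$-entry of the product of $\left(\frac{\6\xi}{\6z}\right)^{-1}$ with the matrix of first antiholomorphic derivatives of $\xi$, i.e.\ the second factor appearing in the statement. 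This is the asserted identity.

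The only point requiring care is the index and transpose bookkeeping: one must track which slot of each Jacobian is contracted, so that the free upper index $i$ lands in the $T^{1,0}M$-slot (the $\frac{\6}{\6z^i}$ factor) and the free barred index $\bar j$ in the $d\bar z^j$-slot, and one must reconcile this with the row/column conventions implicit in the explicit form $\varphi(t)=\left(\frac{\6}{\6z}\right)^T\left(\frac{\6\xi}{\6z}\right)^{-1}\bar\6\xi$. I expect this to be the main obstacle, although it is purely a matter of conventions. The transformation formulae of the preceding Remark are not needed for the coefficient extraction itself; they enter only in justifying that the local expressions patch together to a global form, which is precisely the content of the Kodaira claim invoked above.
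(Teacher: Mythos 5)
Your computation is correct, and there is nothing in the paper to compare it against: the claim is stated without proof, being quoted from \cite{rz}, and the direct coefficient extraction you perform is precisely the argument implicit there --- expand $\bar\6\xi^\alpha=\sum_j\frac{\6\xi^\alpha}{\6\bar z^j}\,d\bar z^j$ in the coframe of $M_0$, multiply by the inverse holomorphic Jacobian coming from the explicit form $\varphi(t)=\left(\frac{\6}{\6z}\right)^T\left(\frac{\6\xi}{\6z}\right)^{-1}\bar\6\xi$, and read off the coefficient of $d\bar z^j\otimes\frac{\6}{\6z^i}$. You are also right that the transformation formulae of the preceding Remark enter only through the globality statement (the Kodaira claim), not through the coefficient computation. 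One point, however, should be made explicit rather than filed under ``transpose bookkeeping'': what your computation yields is
\[
\varphi^i_{\bar j}=\left[\left(\frac{\6\xi}{\6z}\right)^{-1}\frac{\6\xi}{\6\bar z}\right]^i_{\bar j},
\]
where the second factor is the matrix of \emph{antiholomorphic} derivatives $\frac{\6\xi^\alpha}{\6\bar z^j}$. The claim as printed has $\frac{\6\xi}{\6z}$ (holomorphic derivatives) in both factors, and taken literally it is false: at $t=0$ the left-hand side vanishes, since $\xi(\cdot,0)$ is holomorphic and hence $\bar\6\xi=0$, while $\left(\frac{\6\xi}{\6z}\right)^{-1}\left(\frac{\6\xi}{\6z}\right)^T$ is invertible, in particular nonzero. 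So the discrepancy between your conclusion and the statement is not a matter of row/column conventions but a typo in the statement (the bar over $z$ in the second factor is missing; the correct formula is the one in \cite{rz}), and your proof, by silently substituting the antiholomorphic matrix, establishes the corrected version. Flagging this explicitly would strengthen the write-up; otherwise the argument is complete.
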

    Also, one can see that $\varphi(0)=0$ and, in order for each $J_t$ to define an integrable complex structure on $M$, $\varphi(t)$ must satisfy the Maurer-Cartan equation, i.e.
    \begin{equation*}
        \bar \6 \varphi(t)=\frac{1}{2}[\varphi(t), \varphi(t)].
    \end{equation*}

    \begin{claim}
        Let 
        $\varphi = \varphi_{\bar j}^i d\bar z^j \otimes \frac{\6}{\6z^i}$
         and $ \bar \varphi = \bar \varphi_{ l}^{\bar k} d z^l \otimes \frac{\6}{\6 \bar z^k}$. We define
         $\varphi \bar \varphi:=\varphi_{\bar k}^i \bar \varphi_{\bar j}^k dz^j \otimes \frac{\6}{\6z^i}$.
        Then $ \bar \varphi  \llcorner \varphi=\varphi \bar \varphi$.
    \end{claim}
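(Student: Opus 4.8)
The plan is to prove this identity by a direct computation in local holomorphic coordinates, since $\bar\varphi \llcorner \varphi$ is by definition the contraction $\iota_{\bar\varphi}(\varphi)$ and the entire content of the claim is to evaluate this contraction at the level of components and recognize the output as $\varphi\bar\varphi$.

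First I would record the two tensors in coordinates, with $\varphi = \varphi_{\bar j}^i\, d\bar z^j \otimes \frac{\6}{\6 z^i}$ and $\bar\varphi = \bar\varphi_l^{\bar k}\, dz^l \otimes \frac{\6}{\6 \bar z^k}$. The point to notice at the outset is that $\bar\varphi$ is a $T^{0,1}M$-valued $(1,0)$-form, so that its vector part $\frac{\6}{\6\bar z^k}$ is exactly of the type that pairs non-trivially against the antiholomorphic form part $d\bar z^j$ of $\varphi$, while its form part $dz^l$ will be wedged on the left and the bundle part $\frac{\6}{\6z^i}$ of $\varphi$ is carried along unchanged.

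Next I would apply the contraction following the definition, in which the form part of the contracting tensor is wedged on the left and its vector part is inserted into the form part of the contracted tensor. Inserting $\frac{\6}{\6\bar z^k}$ into $d\bar z^j$ produces $\iota_{\frac{\6}{\6\bar z^k}} d\bar z^j = \delta_k^j$, so that
\[
\bar\varphi \llcorner \varphi = \bar\varphi_l^{\bar k}\,\varphi_{\bar j}^i \left(dz^l \wedge \iota_{\frac{\6}{\6\bar z^k}} d\bar z^j\right)\otimes \frac{\6}{\6z^i} = \bar\varphi_l^{\bar k}\,\varphi_{\bar k}^i\, dz^l \otimes \frac{\6}{\6 z^i}.
\]
Because the surviving pairing is a Kronecker delta, the wedge $dz^l \wedge (\,\cdot\,)$ acts on a scalar and introduces no sign. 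Relabelling the free summation index $l \to j$ and comparing with the definition $\varphi\bar\varphi = \varphi_{\bar k}^i\, \bar\varphi_{\bar j}^k\, dz^j \otimes \frac{\6}{\6z^i}$ then gives the claimed equality, once we note that $\bar\varphi_j^{\bar k}$ and $\bar\varphi_{\bar j}^k$ denote the same conjugate components under the convention that conjugation interchanges barred and unbarred indices.

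The computation is entirely mechanical, and the only step requiring genuine attention---hence the main obstacle, mild as it is---is the bookkeeping of barred versus unbarred indices together with the verification that $\iota_{\frac{\6}{\6\bar z^k}} d\bar z^j = \delta_k^j$ is the sole non-vanishing pairing, so that no cross terms or spurious signs survive in the final expression.
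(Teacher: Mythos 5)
Your proof is correct and follows essentially the same route as the paper: both apply the definition of the contraction $\iota_{\bar\varphi}$ in local coordinates, reduce the pairing $\iota_{\frac{\6}{\6\bar z^k}} d\bar z^j$ to a Kronecker delta, and identify the result with $\varphi\bar\varphi$ after an index relabelling. Your explicit remark that conjugation interchanges barred and unbarred indices even cleans up a notational inconsistency that the paper leaves implicit.
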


    \begin{proof}
        We have that $\varphi \in A^{0,1}(T^{1,0}M)$. Hence, $\bar \varphi \in A^{1,0}(T^{0,1}M)$. We compute the right-hand side and obtain
        \begin{align*}
            \iota_{\bar \varphi}\varphi=
            \bar \varphi  \intprod \varphi=
            \sum_{i,j,k,l} 
                 \left(dz^l \wedge \frac{\6}{\6 \bar z^k} d\bar z^j \otimes \frac{\6}{\6 z^i} \right) \bar \varphi_{\bar l}^k \varphi_{\bar j}^i 
                 &=
            \sum_{i,j,k,l} 
                 \left(dz^l \delta_j ^k \otimes \frac{\6}{\6 z^i} \right) \bar \varphi_{\bar l}^k \varphi_{\bar j}^i 
                &=
             \sum_{i,k,l} 
                    \bar \varphi_{\bar l}^k \varphi^{i}_{\bar k} dz^l \otimes \frac{\6}{\6 z^i} 
        \end{align*}.
    \end{proof}

    \begin{remark}
        We have $\varphi \in A^{0,1}(T^{1,0}M)$. Hence, $\varphi= \bar \eta \otimes Z$, where $\bar \eta \in A^{0,1}(M)$, $Z \in T^{1,0}(M)$.

        Then, 
        \begin{align*}
            \iota_\varphi dz^i=\bar \eta \wedge i_Z dz^i= Z(z^i)\bar \eta \in A^{0,1}(M).
        \end{align*}
    Moreover, 
    \begin{align*}
        \iota_\varphi^2 dz^i =\iota_\varphi(i_\varphi(dz^i))=
        \iota_\varphi(Z(z^i)\bar \eta)=
        Z(z_i)\bar \eta(Z) \bar \eta=0.
    \end{align*}

    Thus, $\iota_\varphi^2 dz^i= \cdots= \iota_\varphi^k dz^i=0$, for any $k \in \mathbb{N}$.
    
    Using the definition of $e^{\iota_{\varphi(t)}}$, we obtain $e^{\iota_{\varphi(t)}}(dz^i)=dz^i+\iota_\varphi dz_i$.

    \end{remark}

    \begin{proposition} {\rm (\cite[Lemma 2.5]{rz})}
    We have the following identities:
    \begin{enumerate}
        \item
        $d \xi^\alpha = \frac{\6 \xi^\alpha}{\6 z^i}
        \left( e^{\iota_{\varphi}}(dz^i) \right)$;
        
        \item 
        $\frac{\6 \xi^\alpha}{\6 z^i} \frac{\6}{\6 \xi^\alpha} =
        \left( (I - \varphi \bar \varphi)^{-1} \right)_i^j \frac{\6}{\6 z^j} -
        \left( (I - \bar \varphi \varphi)^{-1} \bar \varphi \right)_i^{\bar j} \frac{\6}{\6 \bar z^j}$.
    \end{enumerate}
\end{proposition}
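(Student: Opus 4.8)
The plan is to dispatch (1) by a short computation and then reduce (2) to a linear-algebra inversion, the only genuine work being the bookkeeping of the holomorphic, antiholomorphic and conjugate indices. For (1), I would start from the formula for $e^{\iota_\varphi}$ recorded in the Remark preceding the statement, namely $e^{\iota_\varphi}(dz^i) = dz^i + \iota_\varphi dz^i = dz^i + \varphi^i_{\bar j}\, d\bar z^j$. Contracting with $\tfrac{\6\xi^\alpha}{\6 z^i}$ and summing over $i$ yields
\[
\frac{\6\xi^\alpha}{\6 z^i}\, e^{\iota_\varphi}(dz^i)
= \frac{\6\xi^\alpha}{\6 z^i}\, dz^i
+ \frac{\6\xi^\alpha}{\6 z^i}\varphi^i_{\bar j}\, d\bar z^j .
\]
The first term on the right is exactly $\6\xi^\alpha$, the $(1,0)$-part of $d\xi^\alpha$ with respect to $J_0$. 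For the second term I would invoke the defining relation of the Kodaira--Spencer form, $\varphi = \left(\tfrac{\6\xi}{\6 z}\right)^{-1}\bar\6\xi$, which rearranges into the Beltrami identity $\tfrac{\6\xi^\alpha}{\6 z^i}\varphi^i_{\bar j} = \tfrac{\6\xi^\alpha}{\6\bar z^j}$; hence the second term equals $\tfrac{\6\xi^\alpha}{\6\bar z^j}\, d\bar z^j = \bar\6\xi^\alpha$. Adding, $\6\xi^\alpha + \bar\6\xi^\alpha = d\xi^\alpha$, which is (1).

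For (2), I would write the chain rule for the change of complex coordinates $z \leftrightarrow \xi$ on $T_{\C}M$:
\[
\frac{\6}{\6 z^i}
= \frac{\6\xi^\alpha}{\6 z^i}\frac{\6}{\6\xi^\alpha}
+ \frac{\6\bar\xi^\alpha}{\6 z^i}\frac{\6}{\6\bar\xi^\alpha},
\qquad
\frac{\6}{\6\bar z^i}
= \frac{\6\xi^\alpha}{\6\bar z^i}\frac{\6}{\6\xi^\alpha}
+ \frac{\6\bar\xi^\alpha}{\6\bar z^i}\frac{\6}{\6\bar\xi^\alpha}.
\]
I then introduce the auxiliary vector fields $V_i := \tfrac{\6\xi^\alpha}{\6 z^i}\tfrac{\6}{\6\xi^\alpha}$ (the quantity to be computed) and $W_i := \tfrac{\6\bar\xi^\alpha}{\6\bar z^i}\tfrac{\6}{\6\bar\xi^\alpha}$. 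Using the Beltrami identity from (1) and its conjugate $\tfrac{\6\bar\xi^\alpha}{\6 z^i} = \bar\varphi^{\bar k}_i\,\tfrac{\6\bar\xi^\alpha}{\6\bar z^k}$, the two mixed terms collapse to $\tfrac{\6\xi^\alpha}{\6\bar z^i}\tfrac{\6}{\6\xi^\alpha} = \varphi^k_{\bar i}V_k$ and $\tfrac{\6\bar\xi^\alpha}{\6 z^i}\tfrac{\6}{\6\bar\xi^\alpha} = \bar\varphi^{\bar k}_i W_k$, so the chain-rule relations become the linear system
\[
V_i + \bar\varphi^{\bar k}_i W_k = \frac{\6}{\6 z^i},
\qquad
\varphi^k_{\bar i} V_k + W_i = \frac{\6}{\6\bar z^i}.
\]

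Finally I would solve this system for $V_i$. Eliminating $W_k = \tfrac{\6}{\6\bar z^k} - \varphi^l_{\bar k}V_l$ from the first equation gives
\[
\big(\delta^l_i - (\varphi\bar\varphi)^l_i\big)V_l
= \frac{\6}{\6 z^i} - \bar\varphi^{\bar k}_i\,\frac{\6}{\6\bar z^k},
\]
with $(\varphi\bar\varphi)^l_i = \varphi^l_{\bar k}\bar\varphi^{\bar k}_i$ as in the Claim above. For $|t|$ small the matrix $I - \varphi\bar\varphi$ is invertible (since $\varphi(0)=0$), and inverting yields
\[
V_i = \big((I-\varphi\bar\varphi)^{-1}\big)_i^{\,j}\frac{\6}{\6 z^j}
- \big(\bar\varphi\,(I-\varphi\bar\varphi)^{-1}\big)_i^{\,\bar j}\frac{\6}{\6\bar z^j}.
\]
The only remaining point is to put the off-diagonal coefficient into the stated form, which is done with the elementary commutation identity $\bar\varphi\,(I-\varphi\bar\varphi)^{-1} = (I-\bar\varphi\varphi)^{-1}\bar\varphi$ (immediate from $\bar\varphi(I-\varphi\bar\varphi) = (I-\bar\varphi\varphi)\bar\varphi$). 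I expect the main obstacle to be purely notational: keeping the holomorphic, antiholomorphic and conjugate indices in their correct slots so that the two coefficients emerge as $(I-\varphi\bar\varphi)^{-1}$ and $(I-\bar\varphi\varphi)^{-1}\bar\varphi$ respectively, rather than with the factors transposed or the two products swapped.
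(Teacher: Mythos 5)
Your proof is correct, but note first that the paper itself contains no proof of this proposition: it is imported verbatim from \cite[Lemma 2.5]{rz}, so there is no internal argument to compare yours against — what you have written supplies a derivation the paper omits, and it is essentially the one in the cited source. Part (1) is exactly right: the relation $\frac{\6 \xi^\alpha}{\6 z^i}\varphi^i_{\bar j} = \frac{\6 \xi^\alpha}{\6 \bar z^j}$ is just the local definition $\varphi^\lambda = \sum_\alpha A^\lambda_{\alpha}\,\bar\6\xi^\alpha$ read backwards, and together with $e^{\iota_\varphi}(dz^i) = dz^i + \iota_\varphi(dz^i)$ (the Remark preceding the statement) it closes the computation. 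Part (2) is also sound: the chain rule for the smooth coordinate change $(z,\bar z)\mapsto(\xi,\bar\xi)$, the conjugated Beltrami identity, the elimination of $W_k$, the invertibility of $I-\varphi\bar\varphi$ for small $t$ (from $\varphi(0)=0$), and the commutation identity $\bar\varphi(I-\varphi\bar\varphi)^{-1}=(I-\bar\varphi\varphi)^{-1}\bar\varphi$ are all correct. The index worry you flag at the end does resolve itself: with the convention that the upper index is the row index, your system reads $\sum_l (I-\varphi\bar\varphi)^l_i V_l = R_i$, i.e.\ $V = R\,(I-\varphi\bar\varphi)^{-1}$ as row vectors, so $V_i = \bigl((I-\varphi\bar\varphi)^{-1}\bigr)^j_i R_j$, whose coefficients are precisely the entries $\bigl((I-\varphi\bar\varphi)^{-1}\bigr)_i^j$ and $\bigl((I-\bar\varphi\varphi)^{-1}\bar\varphi\bigr)_i^{\bar j}$ appearing in the statement.
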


    \begin{definition}
        Let ($\pi$, $\Mrond$, $I$) be a differentiable family of compact complex manifolds parametrized by $\varphi(t)$, for $I=(-\epsilon, \epsilon)$. For each $t$, we have
        \begin{equation*}
            d:A^{p,q}(M_t) \longrightarrow A^{p+1,q}(M_t) \bigoplus A^{p, q+1}(M_t).
        \end{equation*}
    Then, we define $\6_t:=\pi_t^{p+1,q} \circ d$ and
    $\bar \6_t:=\pi_t^{p,q+1} \circ d$.
   \end{definition}

\smallskip

    We need to recall the following fundamental proposition (see \cite {rz} \cite{kod}, \cite{nl}): 

    \begin{proposition} (\cite[Proposition 2.7]{rz})
        The holomorphic structure on $M_t$ is determined by $\varphi(t)$. Specifically, a differentiable function $f$ defined on any open subset of $M_0$ is holomorphic with respect to the holomorphic structure of $M_t$ if and only if
        \begin{equation*}
            \left (\bar \6 - \sum_i \varphi^i(t) \frac{\6}{\6 z^i} \right)
            f(z)=0,
        \end{equation*}
        where $\varphi^i(t)=\sum_j \varphi(t)_{\bar j}^i d \bar z^j$, or equivalently,
        
        \begin{equation*}
            (\bar \6- \varphi(t) \intprod \6) f(z)=0.
        \end{equation*}
    
     \end{proposition}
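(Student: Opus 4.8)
The plan is to characterize holomorphicity on $M_t$ through the cotangent bundle rather than working directly with antiholomorphic derivatives. By definition of the complex structure $J_t$, a smooth function $f$ on an open subset of $M_0=M$ is holomorphic for $M_t$ precisely when $\bar\6_t f=0$, equivalently when $df$ is of type $(1,0)$ with respect to $J_t$. Since $(\xi_j^1(z,t),\ldots,\xi_j^n(z,t))$ are local holomorphic coordinates on $M_t$, the space of $(1,0)_t$-forms is the module generated by the $d\xi_j^\alpha$. Thus the whole statement reduces to describing this generating set in terms of the fixed operators $\6,\bar\6$ and the datum $\varphi(t)$ on $(M,J_0)$.

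First I would invoke the identity $d\xi^\alpha = \frac{\6\xi^\alpha}{\6 z^i}\,e^{\iota_{\varphi}}(dz^i)$ from the Lemma of \cite{rz} recalled above, together with the Remark computing $e^{\iota_{\varphi}}(dz^i)=dz^i+\iota_\varphi dz^i=dz^i+\varphi^i(t)$, where $\varphi^i(t)=\sum_j \varphi(t)_{\bar j}^i\,d\bar z^j$. Because the Jacobian matrix $\left(\frac{\6\xi^\alpha}{\6 z^i}\right)$ is invertible for small $|t|$, as noted above, the passage $d\xi^\alpha\mapsto dz^i+\varphi^i(t)$ is an invertible change of generators, so the $(1,0)_t$-cotangent space is exactly the $\C$-span of $\{\,dz^i+\varphi^i(t)\,\}_{i=1}^n$.

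The core computation is then a coefficient comparison. I would write $df=\6 f+\bar\6 f=\sum_i \frac{\6 f}{\6 z^i}\,dz^i+\sum_j \frac{\6 f}{\6\bar z^j}\,d\bar z^j$ and impose $df=\sum_i c_i\,(dz^i+\varphi^i(t))$ for smooth coefficients $c_i$. Matching the $dz^i$-terms forces $c_i=\frac{\6 f}{\6 z^i}$, and matching the $d\bar z^j$-terms yields $\frac{\6 f}{\6\bar z^j}=\sum_i \varphi(t)_{\bar j}^i\,\frac{\6 f}{\6 z^i}$ for every $j$. Reassembling these scalar equations into a single $(0,1)$-form gives $\bar\6 f=\sum_i \varphi^i(t)\,\frac{\6 f}{\6 z^i}$, i.e. $\left(\bar\6-\sum_i \varphi^i(t)\frac{\6}{\6 z^i}\right)f=0$. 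Finally I would identify $\sum_i \varphi^i(t)\,\frac{\6 f}{\6 z^i}=\varphi(t)\intprod\6 f$ directly from the contraction conventions, contracting the $\frac{\6}{\6 z^i}$ factor of $\varphi(t)$ against $\6 f=\frac{\6 f}{\6 z^i}dz^i$, which produces the second stated form $(\bar\6-\varphi(t)\intprod\6)f=0$.

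I expect the only genuine obstacle to be bookkeeping: keeping the form-versus-vector index conventions consistent, in particular the precise meaning of $\varphi^i(t)$ and of the contraction $\intprod$, so that the two displayed equivalent formulations match the definitions fixed earlier. The geometric content, namely that the $(1,0)_t$-covectors are spanned by $dz^i+\varphi^i(t)$, is immediate from the recalled Lemma and the invertibility of the Jacobian, so no further analytic input is required.
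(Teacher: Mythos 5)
The paper does not actually prove this proposition: it is recalled verbatim from \cite[Proposition 2.7]{rz}, and the following Remark simply refers to ``the proof of the previous proposition'' in that reference. So there is no internal proof to compare against, and your argument must stand on its own --- which it does; it is correct, and it is essentially the standard dual-frame argument that the cited source uses. Your route (holomorphicity of $f$ on $M_t$ $\Leftrightarrow$ $df$ of type $(1,0)$ for $J_t$; the $(1,0)_t$-frame $\{d\xi^\alpha\}$ rewritten as $\{dz^i+\varphi^i(t)\}$ via the recalled identity $d\xi^\alpha=\frac{\6\xi^\alpha}{\6 z^i}\,e^{\iota_\varphi}(dz^i)$, the computation $e^{\iota_\varphi}(dz^i)=dz^i+\iota_\varphi dz^i=dz^i+\varphi^i(t)$, and invertibility of the Jacobian; then coefficient matching against $dz^i$ and $d\bar z^j$) is sound, and since every step is an equivalence you get both directions of the ``if and only if'' at once. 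Two points are worth making explicit to close the argument cleanly: first, the frame identity you invoke is, once unwound, nothing but the definition of $\varphi(t)$ (namely $\bar\6\xi^\alpha=\frac{\6\xi^\alpha}{\6 z^i}\,\varphi^i(t)$, since $\varphi^\lambda=\sum_\alpha A^\lambda_{\,\alpha}\bar\6\xi^\alpha$), so there is no circularity in using it to prove the present statement; second, the span of $\{dz^i+\varphi^i(t)\}$ should be taken over $\mathcal{C}^\infty$ (pointwise over $\C$), and your coefficient comparison is legitimate because $\{dz^i,\,d\bar z^j\}$ remains a pointwise frame of the complexified cotangent space for all $t$, even though its type decomposition refers to $J_0$. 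The final identification $\sum_i\varphi^i(t)\frac{\6 f}{\6 z^i}=\varphi(t)\intprod\6 f$ matches the paper's contraction convention, so both displayed formulations follow.
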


     \begin{remark}
    From the proof of the previous proposition, we can write the explicit form of $ \bar \6_t$:
    \begin{equation*}
        \bar \6_t f =
        e^{\iota_{\bar \varphi}}
        \left( (I - \bar \varphi \varphi)^{-1} \intprod
        (\bar \6 - \varphi \intprod \6) f
        \right)
    \end{equation*}
\end{remark}

    We need to understand the decomposition of each cotangent bundle $T^*M_t$ and we want to link $(p,q)$-forms form $M_0$ to $(p,q)$-forms from $M_t$. For this, we must use the next definition.

    \begin{definition}
        Let $\alpha \in A^{p,q}(M_0)$. We define the extension map
        \begin{align*}
             e^{\iota_{\varphi(t)}|\iota_{\bar \varphi(t)}}(\alpha)=\alpha_{i_1 \cdots i_p \bar j_1 \cdots  \bar j_q}(z)\left( e^{\iota_{\varphi(t)}}(dz^{i_1} \wedge \cdots dz^{i_p})\right)\wedge
            \left (e^{\iota_{\bar \varphi(t)}}(d \bar z^{j_1} \wedge \cdots  \wedge d \bar z^{j_q})
            \right ),
        \end{align*}
    where, locally, $\alpha=\alpha_{i_1 \cdots i_p \bar j_1 \cdots  \bar j_q} dz^{i_1} \wedge \cdots dz^{i_p} \wedge d \bar z^{j_1} \wedge \cdots  \wedge d \bar z^{j_q} $.

    \end{definition}
    \vspace{5 mm}

    We have the following result (see, \cite[Lemma 2.9]{rz}):

    \begin{proposition} 
    \label{ref:iso_p_q_forms_deformations}
        The extension map $e^{\iota_{\varphi(t)}|\iota_{\bar \varphi(t)}}: A^{p,q}(M_0) \longrightarrow A^{p,q}(M_t)$ is a linear isomorphism as $t$ is arbitrarly small. 
    \end{proposition}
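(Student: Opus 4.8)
The plan is to show that the extension map carries a local coframe of $A^{p,q}(M_0)$ to a local coframe of $A^{p,q}(M_t)$; since it is $\mathcal{C}^\infty(M)$-linear and only transports the coefficient functions $\alpha_{i_1\cdots i_p\bar j_1\cdots\bar j_q}(z)$, it is then a fibrewise linear bijection, hence a linear isomorphism. I would work in a fixed chart. The first step is to record that $\iota_\varphi$ is an \emph{even derivation} of the exterior algebra: writing $\varphi=\bar\eta\otimes Z$ one gets $\iota_\varphi(\alpha\wedge\beta)=\iota_\varphi\alpha\wedge\beta+\alpha\wedge\iota_\varphi\beta$, the two sign factors arising from moving $\bar\eta$ and $\iota_Z$ past $\alpha$ cancelling. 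Consequently $e^{\iota_\varphi}$ is an algebra homomorphism, and since $\iota_\varphi$ lowers the holomorphic degree it is nilpotent, so $e^{\iota_\varphi}$ is in fact an algebra \emph{automorphism} with inverse $e^{-\iota_\varphi}$; the same applies to $e^{\iota_{\bar\varphi}}$. In particular
\[
e^{\iota_\varphi}(dz^{i_1}\wedge\cdots\wedge dz^{i_p})=e^{\iota_\varphi}(dz^{i_1})\wedge\cdots\wedge e^{\iota_\varphi}(dz^{i_p}),
\]
and likewise for the antiholomorphic factors, so the extension map is completely determined by its action on $1$-forms.

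Next I would identify these images as forms of pure type on $M_t$. By the first identity of Proposition (\cite[Lemma 2.5]{rz}), $d\xi^\alpha=\frac{\6\xi^\alpha}{\6 z^i}\,e^{\iota_\varphi}(dz^i)$, and since the Jacobian $\left(\frac{\6\xi^\alpha}{\6 z^i}\right)$ is invertible for small $|t|$ we may invert this to write $e^{\iota_\varphi}(dz^i)=A^i_\alpha\,d\xi^\alpha$ with $\bigl(A^i_\alpha\bigr)=(\6\xi/\6 z)^{-1}$. As the $\xi^\alpha$ are holomorphic coordinates on $M_t$, the $d\xi^\alpha$ constitute a local coframe of $A^{1,0}(M_t)$, whence $\{e^{\iota_\varphi}(dz^i)\}_i$ is again a local coframe of $A^{1,0}(M_t)$. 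Conjugating the identity gives $d\bar\xi^\alpha=\overline{\frac{\6\xi^\alpha}{\6 z^i}}\,e^{\iota_{\bar\varphi}}(d\bar z^i)$ (using $\overline{\iota_\varphi\,\cdot}=\iota_{\bar\varphi}\,\overline{\cdot}$), so that $\{e^{\iota_{\bar\varphi}}(d\bar z^j)\}_j$ is a local coframe of $A^{0,1}(M_t)$.

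Finally I would assemble these pieces. Since $A^{p,q}(M_t)=\Lambda^p A^{1,0}(M_t)\otimes\Lambda^q A^{0,1}(M_t)$, the $\binom{n}{p}\binom{n}{q}$ wedge products
\[
e^{\iota_\varphi}(dz^{i_1})\wedge\cdots\wedge e^{\iota_\varphi}(dz^{i_p})\wedge e^{\iota_{\bar\varphi}}(d\bar z^{j_1})\wedge\cdots\wedge e^{\iota_{\bar\varphi}}(d\bar z^{j_q}),
\]
ranging over $i_1<\cdots<i_p$ and $j_1<\cdots<j_q$, form a local coframe of $A^{p,q}(M_t)$. These are exactly the images under the extension map of the standard coframe $\{dz^I\wedge d\bar z^J\}$ of $A^{p,q}(M_0)$, so the map sends a coframe to a coframe and is therefore a linear isomorphism at each point. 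I expect the only genuine subtlety to be the role of ``$t$ arbitrarily small'': it enters precisely through the invertibility of the Jacobian $(\6\xi/\6 z)$, which is what upgrades the images from a merely linearly independent set to a full coframe; one must also keep in mind throughout that the type decomposition is taken with respect to $J_t$, so that the assertion ``$e^{\iota_\varphi}(dz^i)$ is of type $(1,0)$'' is a statement about $M_t$ and not about $M_0$.
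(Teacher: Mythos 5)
Your proof is correct, but there is an important contextual point: the paper does not actually prove this proposition itself --- it is quoted from \cite[Lemma 2.9]{rz} --- so the comparison is with that reference rather than with an internal argument, and your write-up would in fact fill a gap the paper leaves to the literature. Your route is essentially the one taken in the cited source, and it is well adapted to the paper's setup, since both of its key ingredients already appear nearby: the multiplicativity $e^{\iota_\varphi}(dz^{i_1}\wedge\cdots\wedge dz^{i_p})=e^{\iota_\varphi}(dz^{i_1})\wedge\cdots\wedge e^{\iota_\varphi}(dz^{i_p})$ is exactly \ref{afirm:exp}, which the paper proves by an induction on the powers $\iota_\varphi^k$; your observation that $\iota_\varphi$ is an even, nilpotent derivation (the sign from the antiderivation $\iota_Z$ cancelling against the sign from moving the $1$-form $\bar\eta$) yields the same identity more cleanly and gives the inverse $e^{-\iota_\varphi}$ for free. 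Likewise, the identity $d\xi^\alpha=\frac{\partial\xi^\alpha}{\partial z^i}\,e^{\iota_\varphi}(dz^i)$ and the invertibility of $\left(\frac{\partial\xi^\alpha}{\partial z^i}\right)$ for small $|t|$ are both recorded in the paper (the proposition quoted from \cite[Lemma 2.5]{rz} and the determinant remark preceding the definition of $\varphi(t)$), so your identification of $\{e^{\iota_\varphi}(dz^i)\}_i$ with a coframe of $A^{1,0}(M_t)$ rests only on stated facts. Two minor points to tighten, neither a gap: first, your argument is carried out chart by chart, so to conclude a global isomorphism you should invoke the chart-independence of the extension map, which is implicit in its definition and is where the globality of $\varphi(t)$ enters; second, you correctly flag that ``$t$ arbitrarily small'' is used exactly once, for the invertibility of the Jacobian --- it would be worth adding that without it the images $e^{\iota_\varphi}(dz^i)$ could fail to span $A^{1,0}(M_t)$, which is precisely how the statement can degenerate.
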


    \begin{definition}
        We define the simultaneous contraction of a $(p,q)$ form by a $(0,1)$ vector form $\varphi$ as
        \begin{equation*}
            \varphi \cs \alpha:=
            \alpha_{i_1 \cdots i_p \bar j_1 \cdots \bar j_q }
            \varphi \intprod dz^{i_1}
            \wedge \cdots \wedge 
            \varphi \intprod dz^{i_p} \wedge 
            \varphi \intprod d \bar z^{j_1} \wedge 
            \cdots \varphi \intprod d \bar z^{j_q},
        \end{equation*}
    where, $\varphi \in A^{0,1}(T^{1,0}M_t)$ and $\alpha \in A^{p,q}M_t$ is written, locally, as $\alpha=\alpha_{i_1 \cdots i_p \bar j_1 \cdots \bar j_q } dz^{i_1} \wedge \cdots \wedge dz_{i_p} \wedge d \bar z^{j_1} \wedge \cdots  \wedge d \bar z^{j_q}$.
    \end{definition}

    We will prove a useful computational fact about the relation between the $e^{i_\varphi}$ operator and the wedge product.

    \begin{claim}\label{afirm:exp}
        \begin{equation*}
            e^{\iota_\varphi} dz^{i_1} \wedge \cdots \wedge e^{\iota_\varphi} dz^{i_p}=
            e^{\iota_\varphi}(dz^{i_1} \wedge \cdots \wedge dz^{i_p})
        \end{equation*}
    \end{claim}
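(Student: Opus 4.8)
The plan is to deduce the claim from the single structural fact that $\iota_\varphi$ is an \emph{even} derivation of the exterior algebra $A^*(M)$; once this is known, multiplicativity of $e^{\iota_\varphi}$ is a purely formal consequence, and the $p$-fold statement follows by induction.

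First I would work locally and write $\varphi = \bar\eta \otimes Z$ with $\bar\eta \in A^{0,1}(M)$ and $Z \in T^{1,0}(M)$, so that by definition $\iota_\varphi \alpha = \bar\eta \wedge \iota_Z \alpha$ for every form $\alpha$. The central computational step is the sign-free Leibniz identity
\[
\iota_\varphi(\alpha \wedge \beta) = (\iota_\varphi \alpha)\wedge \beta + \alpha \wedge (\iota_\varphi \beta).
\]
I would prove this by expanding $\iota_Z(\alpha \wedge \beta)$ through the usual anti-derivation rule for the interior product, which carries a sign $(-1)^{|\alpha|}$, and then commuting the one-form $\bar\eta$ past $\alpha$ in the resulting second term, which produces a compensating sign $(-1)^{|\alpha|}$. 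The two signs cancel. This cancellation — the fact that $\iota_\varphi$ comes out \emph{even} even though it is assembled from two odd operations (contraction by $Z$ and left multiplication by $\bar\eta$) — is really the only point requiring genuine care, and I expect it to be the main obstacle.

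Next, from the derivation property I would conclude that $e^{\iota_\varphi}$ is an algebra homomorphism, i.e. $e^{\iota_\varphi}(\alpha \wedge \beta) = e^{\iota_\varphi}\alpha \wedge e^{\iota_\varphi}\beta$. This is the standard ``exponential of a derivation is an automorphism'' argument: iterating the Leibniz rule gives $\iota_\varphi^k(\alpha \wedge \beta) = \sum_{j} \binom{k}{j}(\iota_\varphi^j \alpha)\wedge(\iota_\varphi^{k-j}\beta)$, and dividing by $k!$ and summing factors the double series into the product of the two exponential series. (Note that there is no convergence issue, since $\iota_\varphi$ lowers the holomorphic degree and hence $\iota_\varphi^{p+1}$ annihilates $A^{p,q}(M)$, so every sum is finite.) Applying this identity repeatedly to the factors $dz^{i_1}, \ldots, dz^{i_p}$ yields exactly the asserted equality.

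As a sanity check and as a self-contained alternative, I would note that the whole identity can also be verified by hand using the preceding Remark: since $\iota_\varphi dz^i = Z(z^i)\,\bar\eta$ is proportional to $\bar\eta$ and $\bar\eta \wedge \bar\eta = 0$, expanding the left-hand side $\bigwedge_s\bigl(dz^{i_s} + \iota_\varphi dz^{i_s}\bigr)$ leaves only the terms containing at most one factor $\bar\eta$; on the right-hand side $\iota_\varphi^2(dz^{i_1}\wedge \cdots \wedge dz^{i_p}) = 0$ for the same reason, so $e^{\iota_\varphi}$ truncates to $\Id + \iota_\varphi$, and comparing the two truncated expansions term by term (using the sign-free Leibniz rule) again gives equality. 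Either route reduces the statement to the parity bookkeeping of the first step.
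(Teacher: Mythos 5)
Your main argument is correct and follows essentially the same route as the paper: the paper also rests on the sign-free Leibniz rule for $\iota_\varphi$ (which it asserts without proof, in the special case $\iota_\varphi(dz^{i_1}\wedge dz^{I'})$), derives by induction the truncated binomial formula $\iota_\varphi^k(dz^{i_1}\wedge dz^{I'})= k\,\iota_\varphi dz^{i_1}\wedge \iota_\varphi^{k-1}dz^{I'}+dz^{i_1}\wedge \iota_\varphi^{k}(dz^{I'})$ (truncated because $\iota_\varphi^2 dz^{i_1}=0$), resums the exponential series to split off one factor, and iterates. What you add is the actual verification of the even-derivation property --- your sign cancellation between the antiderivation $\iota_Z$ and left multiplication by the odd form $\bar\eta$ is exactly the right computation --- and the full binomial identity, which yields the stronger statement that $e^{\iota_\varphi}$ is a wedge-algebra homomorphism on all of $A^{*}(M)$, not only on products of the $dz^i$. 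One point you should make explicit: the claim is later applied to $\varphi(t)=\sum_i\varphi^i\otimes\frac{\partial}{\partial z^i}$, which is \emph{not} decomposable, so your local reduction to $\varphi=\bar\eta\otimes Z$ must be supplemented by the (one-line) remark that the Leibniz identity is linear in $\varphi$ and hence extends to sums of decomposable terms.

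Your closing ``self-contained alternative'', however, is incorrect precisely in this non-decomposable case, and should be deleted or corrected. For general $\varphi$ it is not true that $\iota_\varphi^2(dz^{i_1}\wedge\cdots\wedge dz^{i_p})=0$, nor that the left-hand side contains at most one contraction per term: with $\varphi=\varphi^1\otimes\frac{\partial}{\partial z^1}+\varphi^2\otimes\frac{\partial}{\partial z^2}$ one gets
\[
\tfrac{1}{2}\,\iota_\varphi^2(dz^{1}\wedge dz^{2})=\varphi^{1}\wedge\varphi^{2}=\iota_\varphi dz^{1}\wedge \iota_\varphi dz^{2}\neq 0 .
\]
Both sides of the claimed identity genuinely contain terms of every order $k\le p$, and the content of the claim is that these match order by order --- which is exactly what your binomial argument proves. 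The truncation $e^{\iota_\varphi}=\Id+\iota_\varphi$ on $p$-fold products holds only for rank-one (decomposable) $\varphi$, i.e.\ the situation of the Remark preceding the claim. Since the alternative was offered only as a sanity check, your proof stands on its main route.
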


    \begin{proof}
    Let $I':=I-\{i_1\}$. Then
    \begin{align*}
        \iota_\varphi(dz^{i_1} \wedge dz^{I'})=
        (\iota_\varphi dz^{i_1}) \wedge dz^{I'}+ dz^{i_1} \wedge \iota_\varphi(dz^{I'}).
    \end{align*}
    By induction, we can prove that 
    \begin{equation*}
        \iota_\varphi^k(dz^{i_1} \wedge dz^{I'})=
        k \iota_\varphi dz^{i_1} \wedge \iota_\varphi ^{k-1} dz^{I'}+
        dz^{i_1} \wedge \iota_\varphi^k(dz^{I'}),
    \end{equation*}
    for any $k$.

    Hence,
   \begin{equation*}
    \begin{split}
        e^{\iota_\varphi} & (dz^{i_1} \wedge dz^{I'}) = 
        \sum_{k=0}^{\infty}
        \frac{1}{k!} e_\varphi^k(dz^{i_1} \wedge dz^{I'}) =
        \sum_{k=0}^{\infty}
        \frac{1}{k!} 
        \left(k \, \iota_\varphi \, dz^{i_1} \wedge \iota_\varphi^{k-1} dz^{I'}
        + dz^{i_1} \wedge \iota_\varphi^k(dz^{I'})\right) \\
        & = \iota_\varphi dz^{i_1} \wedge 
        \left(
        \sum_{k=1}^\infty \frac{k}{k!} \iota_\varphi^{k-1} dz^{I'}
        \right) +
        dz^{i_1} \wedge
        \left(
        \sum_{k=0}^\infty \frac{1}{k!} \iota_\varphi^{k} (dz^{I'})
        \right)
    \end{split}
\end{equation*}
    Also, we have
    
    \begin{equation*}
        \begin{split}
            \sum_{k=1}^\infty \frac{k}{k!} \iota_\varphi^{k-1} dz^{I'}=
            \sum_{k=1}^\infty \frac{1}{(k-1)!} \iota_\varphi^{k-1} dz^{I'}=
            \sum_{k=0}^\infty \frac{1}{k!} \iota_\varphi^{k} dz^{I'}=e^{\iota_\varphi} dz^{I'}.
        \end{split}
    \end{equation*}

    Then,
    \begin{equation*}
        \begin{split}
            e^{\iota_\varphi} & (dz^{i_1} \wedge dz^{I'})=
            e^{\iota_\varphi} (dz^{i_1}) \wedge e^{\iota_\varphi} (dz^{I'})+
            d z^{i_1} \wedge e^{\iota_\varphi} (dz^{I'})
            = (e^{\iota_\varphi} (dz^{i_1})+ d z^{i_1}) \wedge e^{\iota_\varphi} (dz^{I'}) \\
            &=
            e^{i_\varphi} (dz^{i_1}) \wedge e^{i_\varphi} (dz^{I'})
        \end{split}
    \end{equation*}
    By repeating the process $n$ times, we obtain the conclusion.
        \end{proof}

\smallskip

    With the previous claim, we can write the extension map using simultaneous contraction.

    \begin{lemma} {\rm (\cite{rz})} \label{calcul}
    \begin{equation*}
        e^{\iota_\varphi|\iota_{\bar \varphi}}=(I+\varphi+ \bar \varphi) \cs 
    \end{equation*}
        
    \end{lemma}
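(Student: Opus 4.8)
The plan is to unwind both sides on a local coframe and reduce everything to the already-established Claim \ref{afirm:exp}. The essential structural point is that the simultaneous-contraction operator $(I+\varphi+\bar\varphi)\cs$ acts factor-by-factor on a decomposable form, so it suffices to compute how the vector form $I+\varphi+\bar\varphi$ contracts a single $dz^i$ and a single $d\bar z^j$, and then to recombine the resulting wedge into the two exponentials.

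First I would record the vanishing of the two ``mixed'' contractions. Since $\varphi\in A^{0,1}(T^{1,0}M)$ contracts only holomorphic cotangent directions while $\bar\varphi\in A^{1,0}(T^{0,1}M)$ contracts only anti-holomorphic ones, we get $\bar\varphi\intprod dz^i=0$ and $\varphi\intprod d\bar z^j=0$. Hence
\[
(I+\varphi+\bar\varphi)\intprod dz^i = dz^i+\varphi\intprod dz^i,
\qquad
(I+\varphi+\bar\varphi)\intprod d\bar z^j = d\bar z^j+\bar\varphi\intprod d\bar z^j.
\]
On the other hand, the computation $\iota_\varphi^2 dz^i=0$ recorded in the Remark above shows that the exponential truncates after first order on a single covector, so $e^{\iota_\varphi}(dz^i)=dz^i+\iota_\varphi dz^i$. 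Comparing the two displays yields $(I+\varphi+\bar\varphi)\intprod dz^i=e^{\iota_\varphi}(dz^i)$, and by the conjugate computation $(I+\varphi+\bar\varphi)\intprod d\bar z^j=e^{\iota_{\bar\varphi}}(d\bar z^j)$.

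Next, writing $\alpha=\alpha_{i_1\cdots i_p\bar j_1\cdots\bar j_q}\,dz^{i_1}\wedge\cdots\wedge dz^{i_p}\wedge d\bar z^{j_1}\wedge\cdots\wedge d\bar z^{j_q}$ locally, the definition of simultaneous contraction combined with the previous paragraph gives
\begin{align*}
(I+\varphi+\bar\varphi)\cs\alpha
&= \alpha_{i_1\cdots i_p\bar j_1\cdots\bar j_q}\,
   e^{\iota_\varphi}(dz^{i_1})\wedge\cdots\wedge e^{\iota_\varphi}(dz^{i_p}) \\
&\qquad\wedge\,
   e^{\iota_{\bar\varphi}}(d\bar z^{j_1})\wedge\cdots\wedge e^{\iota_{\bar\varphi}}(d\bar z^{j_q}).
\end{align*}
Applying Claim \ref{afirm:exp} to the holomorphic block and its conjugate statement to the anti-holomorphic block turns the products of contracted factors into $e^{\iota_\varphi}(dz^{i_1}\wedge\cdots\wedge dz^{i_p})$ and $e^{\iota_{\bar\varphi}}(d\bar z^{j_1}\wedge\cdots\wedge d\bar z^{j_q})$ respectively. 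This is precisely the definition of $e^{\iota_\varphi|\iota_{\bar\varphi}}(\alpha)$, which establishes the identity.

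I do not anticipate a genuine obstacle, since the combinatorial heart of the matter has been front-loaded into Claim \ref{afirm:exp}. The only points requiring care are verifying that the two cross-contractions vanish and that $e^{\iota_\varphi}$ collapses to first order on a single covector, so that the bracketed factor $I+\varphi+\bar\varphi$ reproduces $e^{\iota_\varphi}$ and $e^{\iota_{\bar\varphi}}$ on the two respective types of covectors. One should also confirm that the wedge ordering is respected when separating the holomorphic and anti-holomorphic blocks, but since each exponential is applied strictly within its own block, this is handled automatically by Claim \ref{afirm:exp} and its conjugate.
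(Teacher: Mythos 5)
Your proof is correct and follows essentially the same route as the paper: expand the simultaneous contraction factor-by-factor on a local expression of $\alpha$, identify $(I+\varphi+\bar\varphi)\intprod dz^i$ with $e^{\iota_\varphi}(dz^i)$ (and its conjugate) using the truncation $\iota_\varphi^2 dz^i=0$, and then invoke Claim \ref{afirm:exp} to recombine the wedge of exponentials into the extension map. Your explicit verification that the mixed contractions $\bar\varphi\intprod dz^i$ and $\varphi\intprod d\bar z^j$ vanish is a point the paper leaves implicit, but the argument is the same.
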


    \begin{proof}
        Let $\alpha=\sum_{I,J} \alpha_{I,J} dz^I \wedge d \bar z^J$.

    Then,
  \begin{equation*}
    \begin{split}
        (I + \varphi + \bar \varphi) \cs \alpha &= \sum_{I,J} \alpha_{I,J} \left( (I + \varphi + \bar \varphi) \intprod d z^{i_1} \wedge \cdots \wedge (I + \varphi + \bar \varphi) \intprod d z^{i_p} \wedge (I + \varphi + \bar \varphi) \intprod d \bar z^{j_1} \wedge \cdots \right. \\
        & \left. \wedge (I + \varphi + \bar \varphi) \intprod d \bar z^{j_q} \right) = \sum_{I,J} \alpha_{I,J} (d z^{i_1}+ \iota_\varphi d z^{i_1}) \wedge \cdots \wedge (d z^{i_p}+ \iota_\varphi d z^{i_p}) \wedge (d \bar z^{j_1} + \iota_{\bar \varphi} d \bar z^{j_1}) \wedge \\
        & \cdots \wedge (d \bar z^{j_q} + \iota_{\bar \varphi} d \bar z^{j_q}) = \sum_{I,J} \alpha_{I,J} e^{\iota_\varphi}(d z^{i_1}) \wedge \cdots \wedge e^{\iota_\varphi}(d z^{i_p}) \wedge e^{\iota_{\bar \varphi}}(d \bar z^{j_1}) \wedge \cdots \wedge e^{\iota_{\bar \varphi}}(d \bar z^{j_q}).
    \end{split}
\end{equation*}

Using \ref{afirm:exp}, we obtain
\begin{equation*}
    \begin{split}
        \sum_{I,J} \alpha_{I,J} e^{\iota_\varphi}(d z^{i_1} \wedge \cdots \wedge d z^{i_p}) \wedge e^{\iota_{\bar \varphi}}(d \bar z^{j_1} \wedge \cdots \wedge d \bar z^{j_q}) &= e^{\iota_\varphi | \iota_{\bar \varphi}}(\alpha).
    \end{split}
\end{equation*}
\end{proof}

Thus, using the proof of \cite[Proposition 2.13]{rz}, we can write explicitly the action of $\6_t$ and $ \bar \6_t$ operator on $e^{i_\varphi|i_{\bar \varphi}}(\alpha)$, where $\alpha \in A^{p,q}(M_0)$. More precisely,

\begin{equation} \label{delt}
    \6_t(e^{\iota_{\varphi}|\iota_{\overline{\varphi}}}\alpha)=e^{\iota_{\varphi}|\iota_{\overline{\varphi}}}\left((I-\varphi\overline{\varphi})^{-1}\cs([ \bar \6,\iota_{\overline{\varphi}}]+\6)(I-\varphi\overline{\varphi})\cs\alpha\right)
\end{equation}
    
\begin{equation} \label{delbart}
    \bar \6_t(e^{\iota_{\varphi}|\iota_{\overline{\varphi}}}\alpha)=e^{\iota_{\varphi}|\iota_{\overline{\varphi}}}\left((I-\overline{\varphi}\varphi)^{-1}\cs([\6,\iota_{\varphi}]+\bar \6)(I-\overline{\varphi}\varphi)\cs\alpha\right).
\end{equation}

\subsection{Stability of special Hermitian metrics at deformations}\label{rez}

\medskip

We derive a necessary condition that the deformation must satisfy in order to preserve the property of a Hermitian metric to be special.

Let $\epsilon>0$ and $I:=(-\epsilon, \epsilon)$.
Suppose $\{M_t\}_{t\in I}$ is a small complex deformation (\ref{def:complex_deformation}) with $M_0=M$. As explained in Section \ref{sec:prelim}, the complex structures on $M_t$ can be described by a family of tensors $\varphi(t)\in A^{0,1}(T^{1,0}(M))$, for $t \in I$.
Let $\{\omega_t\}_{t\in I}$ be a smooth family of Hermitian metrics on $\{M_t\}_{t\in I}$. By \ref{ref:iso_p_q_forms_deformations}, for each $t \in I$ there exists $\omega(t) \in \mathcal{A}^{1,1}(M)$ such that:
\begin{equation*}
	\omega_t=e^{\iota_{\varphi}|\iota_{\overline{\varphi}}}\,\,(\omega(t)),
\end{equation*}
where $\omega(0) = \omega_0 = \omega$.

Locally, for each $k$ and index families $I = \{i_1, \ldots, i_k, j_1, \ldots, j_k | i_l < i_s, j_l < j_s, \forall l<s \}$, we find $\omega_I \in \mathcal{C}^\infty(I \times M)$ such that
\[ \omega^k(t) 
= 
\sum
\omega_I(t, \cdot)
dz^{i_1}\wedge d\overline{z}^{j_1}\wedge\dots\wedge dz^{i_{k}}\wedge d\overline{z}^{j_{k}}, \]
where the sum is taken over all index families as above.
We will also write $\omega_I(t) := \omega_I(t, \cdot)$.

Then $\omega_t^{k}$ has local expression 
$e^{\iota_{\varphi}|\iota_{\overline{\varphi}}}(\omega^{k}(t))
=e^{\iota_{\varphi}|\iota_{\overline{\varphi}}}
(\omega_I(t) dz^{i_1}\wedge d\overline{z}^{j_1}\wedge\dots\wedge dz^{i_{k}}\wedge d\overline{z}^{j_{k}})$, set
\[(\omega^{k}(t))'
:=
\frac{\6}{\6 t}(\omega_I(t))
dz^{i_1}\wedge d\overline{z}^{j_1}\wedge\dots dz^{i_{k}}\wedge d\overline{z}^{j_{k}}
\in \mathcal{A}^{k,k}(M).
\]

\begin{theorem}
	Let $\{ M_t \}_{t \in I}$ be a small complex deformation of a compact complex $n$-manifold and $\{ \omega_t \}_{t \in I}$ be a smooth family of Hermitian metrics along $\{ M_t \}_{t \in I}$. 
	If every metric $\omega_t$ satisfies $\6_t \bar \6_t \omega_t^k=0$, for $t\in I$, it must hold that
	\begin{equation}
		2i\mathfrak{Im}(\6 \circ \iota_{\varphi'(0)}\circ \6) (\omega^{k})=\6 \bar \6 (\omega^{k}(0))'.
	\end{equation}
\end{theorem}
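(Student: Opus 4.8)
The plan is to use the explicit transport formulas \eqref{delt} and \eqref{delbart} to rewrite the hypothesis $\6_t\bar\6_t\omega_t^k=0$ as a $t$-family of identities on the fixed manifold $M=M_0$, and then to differentiate this family once at $t=0$. As a preliminary remark, since the extension map equals the simultaneous contraction $(I+\varphi+\bar\varphi)\cs$ (Lemma \ref{calcul}), it is multiplicative for the wedge product, which is exactly the compatibility $\omega_t^k=e^{\iota_\varphi|\iota_{\bar\varphi}}(\omega^k(t))$ already recorded before the statement. Applying \eqref{delbart} and then \eqref{delt}, and abbreviating the two transported operators by
\[
B:=(I-\bar\varphi\varphi)^{-1}\cs\big([\6,\iota_\varphi]+\bar\6\big)(I-\bar\varphi\varphi)\cs,\qquad
C:=(I-\varphi\bar\varphi)^{-1}\cs\big([\bar\6,\iota_{\bar\varphi}]+\6\big)(I-\varphi\bar\varphi)\cs,
\]
I obtain $\6_t\bar\6_t\omega_t^k=e^{\iota_\varphi|\iota_{\bar\varphi}}\big(C(B(\omega^k(t)))\big)$. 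Because $e^{\iota_\varphi|\iota_{\bar\varphi}}$ is an isomorphism (Proposition \ref{ref:iso_p_q_forms_deformations}), the hypothesis is \emph{equivalent} to $C(B(\omega^k(t)))=0$ for all $t\in I$.

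The core of the argument is a first–order Taylor expansion at $t=0$, where $\varphi(0)=0$. Writing $\varphi=t\,\varphi'(0)+O(t^2)$, every quadratic factor $(I\mp\varphi\bar\varphi)^{\pm1}\cs$ and $(I\mp\bar\varphi\varphi)^{\pm1}\cs$ equals $\mathrm{id}+O(t^2)$, whereas $[\6,\iota_\varphi]=t\,[\6,\iota_{\varphi'(0)}]+O(t^2)$ and $[\bar\6,\iota_{\bar\varphi}]=t\,[\bar\6,\iota_{\overline{\varphi'(0)}}]+O(t^2)$. Combining this with $\omega^k(t)=\omega^k+t\,(\omega^k(0))'+O(t^2)$ and $\omega^k(0)=\omega^k$, I would expand
\[
B(\omega^k(t))=\bar\6\omega^k+t\big(\bar\6(\omega^k(0))'+[\6,\iota_{\varphi'(0)}]\omega^k\big)+O(t^2),
\]
\[
C(B(\omega^k(t)))=\6\bar\6\omega^k+t\big(\6\bar\6(\omega^k(0))'+\6[\6,\iota_{\varphi'(0)}]\omega^k+[\bar\6,\iota_{\overline{\varphi'(0)}}]\bar\6\omega^k\big)+O(t^2).
\]
The vanishing of the order-$0$ term recovers the special condition $\6\bar\6\omega^k=0$ on $M_0$, while the vanishing of the order-$t$ term is the identity to be transformed into the claimed formula.

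Finally I would simplify the two correction terms using $\6^2=\bar\6^2=0$ and the reality of $\omega^k$. With $[\6,\iota_{\varphi'(0)}]=\6\iota_{\varphi'(0)}-\iota_{\varphi'(0)}\6$ and the analogous bar-commutator, the identities $\6^2=0=\bar\6^2$ collapse these to
\[
\6[\6,\iota_{\varphi'(0)}]\omega^k=-\,\6\circ\iota_{\varphi'(0)}\circ\6(\omega^k),\qquad
[\bar\6,\iota_{\overline{\varphi'(0)}}]\bar\6\omega^k=\bar\6\circ\iota_{\overline{\varphi'(0)}}\circ\bar\6(\omega^k).
\]
Since $\omega$ is real, $\omega^k$ is a real $(k,k)$-form, and the elementary conjugation rules $\overline{\6\beta}=\bar\6\bar\beta$ and $\overline{\iota_{\varphi'(0)}\gamma}=\iota_{\overline{\varphi'(0)}}\bar\gamma$ (immediate from $\iota_\varphi(\alpha\otimes s)=\bar\eta\wedge\iota_Z\alpha$) identify the second term as the complex conjugate of the first. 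Setting $P:=\6\circ\iota_{\varphi'(0)}\circ\6(\omega^k)$, the order-$t$ relation reads $\6\bar\6(\omega^k(0))'-P+\bar P=0$, that is $\6\bar\6(\omega^k(0))'=P-\bar P=2i\,\mathfrak{Im}(P)$, which is precisely the assertion.

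The step I expect to be most delicate is the bookkeeping in this first-order expansion: I must verify that the quadratic contraction factors $(I\mp\varphi\bar\varphi)^{\pm1}\cs$ truly contribute nothing at order $t$ (so that they may be discarded), and I must pin down the exact sign conventions of the brackets $[\6,\iota_\varphi]$ and $[\bar\6,\iota_{\bar\varphi}]$ inherited from \eqref{delt}–\eqref{delbart}, since the final $\mathfrak{Im}$ hinges on the \emph{relative} sign of the two correction terms. The conjugation identity $\overline{\iota_{\varphi'(0)}\gamma}=\iota_{\overline{\varphi'(0)}}\bar\gamma$ also deserves a one-line verification from the definition of the contraction.
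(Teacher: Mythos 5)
Your proposal is correct and takes essentially the same route as the paper's proof: transport $\6_t\bar\6_t$ to $M_0$ via \eqref{delt}--\eqref{delbart}, expand to first order in $t$ (so all quadratic factors $(I\mp\varphi\bar\varphi)^{\pm1}\cs$, $(I\mp\bar\varphi\varphi)^{\pm1}\cs$ are $\operatorname{\text{\sf id}}+o(t)$), simplify the commutators with $\6^2=\bar\6^2=0$, and use the reality of $\omega^k$ to recognize the second correction term as the conjugate of the first, yielding $\6\bar\6(\omega^k(0))'=2i\,\mathfrak{Im}(\6\circ\iota_{\varphi'(0)}\circ\6)(\omega^k)$. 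The only (harmless) organizational difference is that you discard the outer extension map at once by invoking its injectivity (Proposition \ref{ref:iso_p_q_forms_deformations}), whereas the paper keeps it, expands it as $(I+t\varphi'(0)+\overline{t\varphi'(0)})\cs$ via \ref{calcul}, and observes that only the identity part contributes at order $t$; your variant is, if anything, slightly cleaner, and you also make explicit the use of $\6\bar\6\omega^k(0)=0$ (the order-zero term), which the paper uses tacitly.
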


\begin{proof}
	From hypothesis, we have that $\6_t \bar \6_t \omega_t^k=0$, for any $t \in I$. Then,
	\begin{equation*}
		\frac{\6}{\6 t}(\6_t \bar \6_t \omega_t^k)=0.
	\end{equation*}
	By definition of extension map, we have
	\begin{equation}
		\6_t \bar \6_t (\omega_t^k)= \6_t \bar \6_t(e^{i_\varphi|i_{\bar \varphi}}(\omega^k(t)))
	\end{equation}
	
	Using relations \ref{delt} and \ref{delbart}, the last relation became
	\begin{align*}
		&\6_t\bar \6_t(e^{\iota_{\varphi}|\iota_{\overline{\varphi}}}(\omega^{k}(t))=\6_t(e^{\iota_{\varphi}|\iota_{\overline{\varphi}}}\left((I-\overline{\varphi}\phi)^{-1}\cs([\del,\iota_{\varphi}]+\bar \6)(I-\overline{\varphi\varphi})\cs\,\omega^{k}(t)\right))\\
		&= e^{\iota_{\varphi}|\iota_{\overline{\varphi}}}\Biggl(\Bigl((I-\varphi\overline{\varphi})^{-1}\cs([\bar \6,\iota_{\overline{\varphi}}]+\6)(I-\varphi\overline{\varphi})\Bigr)\cs\Bigl((I-\overline{\varphi}\varphi)^{-1}\cs([\6,\iota_{\varphi}]+\bar \6)(I-\overline{\varphi}\varphi)\cs\,\omega^{k}(t)\Bigr) \Biggr).
	\end{align*}
	
	We will use the Taylor series expansion around $0$ for $\varphi$ and $\omega^k$ and we obtain
	\[
	\varphi(t)=t\varphi'(0)+o(t), \quad \omega^k (t)=\omega^{k}(0)+t\omega^{k}(0)'+o(t).
	\]
	
	So, 
	\begin{equation*}
		(I-\varphi\overline{\varphi})^{-1}=(I-\overline{\varphi}\varphi)^{-1}=(I-\varphi\overline{\varphi})=(I-\overline{\varphi}\varphi)=I+o(t)
	\end{equation*}
	
	Using \ref{calcul}, we have
	
	\begin{align*}
		&\6_t\bar \6_t\omega_t^{k}=\left(I+t\varphi'(0)+\overline{t\varphi}'(0)\right)\cs\,\left([\bar \6,\overline{t\varphi'(0)}\intprod]+\6\right)\left([\6,t\varphi'(0)\intprod]+\bar \6\right)\left(\omega^{k}(0)+t(\omega^{k}(0))'\right)+o(t).
	\end{align*}
	
	But we see that
	\begin{align*}
		\left([\6,t\varphi'(0)\intprod]+\bar \6\right)\left(\omega^{k}(0)+t(\omega^{k}(0))'\right)+o(t)=
		[\6,t\varphi'(0)\intprod] \omega^k(0)+ \bar \6 \omega^k(0)+t \bar \6(\omega^k(0)')+o(t).
	\end{align*}
	
	Then, 
	
	\begin{align} \label{calc_init}
		& \6_t \bar \6_t \omega_t^k = \left( I + t \varphi'(0) + \overline{t \varphi'(0)} \right) \cs \left( [\bar \6, \overline{t \varphi'(0)} \intprod ] + \6 \right) \left( [\6, t \varphi'(0) \intprod ] \omega^k(0) + \bar \6 \omega^k(0) + t \bar \6 (\omega^k(0)') \right)\nonumber \\
		& \quad + o(t).
	\end{align}
	
	We denote by $A:=[\6, t \varphi'(0) \intprod ] \omega^k(0) + \bar \6 \omega^k(0) + t \bar \6 (\omega^k(0)')$.
	
	Hence, 
	\begin{align*}
		\6& A= \6 \left(\6 (t \varphi(0)' \intprod \omega(0)^k)-t \varphi(0)' \intprod \omega^k(0) \right) + \6 \bar \6 \omega^k(0)+t \6\bar \6 (\omega^k(0)')=-t \6\left(\varphi(0)' \intprod \6 \omega^k(0) \right)+\\
		+t& \6 \bar \6 (\omega^k(0)').
	\end{align*}

	Thus, 
	\begin{align*}
		&[\bar \6,\overline{t\varphi'(0)}\intprod]+\6\left([\6, t \varphi(0)' \intprod] \omega^k(0)+\bar \6 \omega^k(0)+t \bar \6(\omega^k(0)')\right)+o(t)=[\bar \6,\overline{t\varphi'(0)}\intprod] \left([\6, t \varphi(0)' \intprod] \omega^k(0)\right)\\
		+& [\bar \6,\overline{t\varphi'(0)}\intprod](\bar \6 \omega^k(0))+ [\bar \6,\overline{t\varphi'(0)}\intprod](t \bar \6 \omega^k(0)')+ \6 A+ o(t).
	\end{align*}
	
	We observe that in the brackets of the first and third terms, there are terms containing t, so they will be absorbed into $o(t)$. The last equality became
	
	\begin{align*}
		& [\bar \6, \overline{t \varphi'(0)} \intprod](\bar \6 \omega^k(0)) + \6 A + o(t) = \bar \6 \left( \overline{t \varphi'(0)} \intprod \bar \6 \omega^k(0) \right) - t \varphi(0)' \intprod \bar \6 \bar \6 \omega^k(0) + \6 A + o(t) = \\ 
		& =t \bar \6 \left( \overline{\varphi'(0)} \intprod \6 \omega^k(0) \right) - t \6 \left( \varphi'(0) \intprod \omega^k(0) \right) + t \6 \bar \6(\omega^k(0)') + o(t).
	\end{align*}
	
	Hence, \ref{calc_init} becomes
	\begin{align*}
		\6_t \bar \6_t \omega_t^k = \left( I + t \varphi'(0) + \overline{t \varphi'(0)} \right) \cs \left(-t \6(\varphi'(0) \intprod \omega^k(0)) + t \bar \6 (\overline{\varphi'(0)} \intprod \bar \6 \omega^k(0))+ t \6 \bar\6 (\omega^k(0)') \right) +o(t).
	\end{align*}
	
	The relevant part is only the simultaneous contraction with the identity; the other two terms contain 
	$t$, and in the simultaneous contraction with forms that contain $t$, they will be absorbed in $o(t)$.
	
	Thus, 
	\begin{align*}
		\6_t \bar \6_t \omega_t^k =-t \6\left(\varphi'(0) \intprod \omega^k(0)\right) + t \bar \6 \left(\overline{\varphi'(0)} \intprod \bar \6 \omega^k(0) \right)+ t \6 \bar\6 (\omega^k(0)')  +o(t).
	\end{align*}
	
	From the hypotesis, $\frac{\6}{\6 t}\restrict{t=0}(\6_t\bar \6_t\omega_t^{k})=0$. Hence,
	\begin{equation*}
		-\6(\varphi'(0)\intprod \6\omega^{k}(0))+\bar \6(\overline{\varphi'(0)}\intprod\bar \6\omega^{k}(0))+\6\bar \6(\omega^{k}(0))'=0.
	\end{equation*}
	
	If we rewrite, we have
	\begin{equation*}
		-(\6\circ \iota_{\varphi'(0)}\circ\6) \omega^{k}+(\bar \6\circ \iota_{\overline{\varphi'(0)}}\circ\bar \6) \omega^{k}+\6\bar \6(\omega^{k}(0))'=0.
	\end{equation*}
	
	Hence,
	\begin{equation*}
		(-\6\circ \iota_{\varphi'(0)}\circ\6+\overline{ \6\circ \iota_{{\varphi'(0)}}\circ \6})\omega^k=-\6 \bar \6 (\omega^{k}(0))',
	\end{equation*}
	which is equivalent to 
	\begin{equation*}
		2i\mathfrak{Im}(\6 \circ \iota_{\phi'(0)}\circ \6) (\omega^{k})=\6 \bar \6 (\omega^{k}(0))'.
	\end{equation*}
\end{proof}

\section{Blow-up of special Hermitian metrics}
\subsection{Basic facts about blow-up}
\label{blow-up}

We will briefly present the construction of the blow-up of a complex manifold along a closed submanifold. This object will  also be a complex manifold together with a holomorphic map between it and the initial manifold. More details can be found in \cite{huy}.

Let $X$ be a complex manifold of dimension $n$ and $Y \subset X$ an arbitrary compact submanifold of dimension $m$. We choose an atlas $(U_i, \varphi_i)$, $\varphi_i: U_i \longrightarrow V_i:=\varphi_i(U_i)$ such that $\varphi_i(U_i \cap Y)=\varphi_i(U_i) \cap \mathbb{C}^m$.

Then $\varphi_j \circ (\varphi_i)^{-1}: V_i \longrightarrow V_j$ and
\begin{align*}
    (\varphi_j \circ (\varphi_i)^{-1})^k &= \sum_{s=m+1}^n x_s \varphi_{ks}^{ji}(x), \quad \text{for } k \geq m+1.
\end{align*}
We define 
\begin{equation*}
    Bl_{V_i \cap \mathbb{C}^m}(V_i):=\{(z,l)|z_i l_j=z_j l_i, i,j \geq m+1\} \subset V_i \times \mathbb{P}^{n-m-1}.
\end{equation*}

Let
\begin{align*}
    \Psi^{ij}: & \ Bl_{V_i \cap \varphi_i(U_j) \cap \mathbb{C}^m}(V_i \cap \varphi_i(U_j)) \longrightarrow Bl_{V_j \cap \varphi_j(U_i) \cap \mathbb{C}^m}(V_j \cap \varphi_j(U_i)) \\
    & (x,l) \mapsto \left( \varphi_j \circ \varphi_i^{-1}(x), \left( \varphi_{sr}^{ji} \right)_{s,r=m+1}^{n} \cdot l \right).
\end{align*}

Then $\Psi^{ij}$ is a well-defined biholomorphism and all the blow-ups on the various charts $\varphi_i(U_i)$ naturally glue, We have the following proposition:

\begin{proposition} {\rm (\cite{huy})}
    Let $Y$ be a complex submanifold of $X$. Then there exists a complex manifold $\hat{X}=Bl_Y(X)$, the blow-up of $X$ along $Y$, together with a  holomorphic map $\sigma: \hat{X} \longrightarrow X$ such that $\sigma: \hat{X} \setminus \sigma^{-1}(Y) \simeq X \setminus Y$ and $\sigma:\sigma^{-1}(Y) \longrightarrow Y$ is isomorphic with $\mathbb{P}(\mathcal{N}_{Y|X}) \longrightarrow Y$ 
\end{proposition}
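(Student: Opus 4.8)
The plan is to promote the explicit local construction described above to a global complex manifold by a gluing argument, and then to read off the two geometric properties of $\sigma$ directly from the charts. Concretely, I would carry out three tasks in order: (i) check that each local model $Bl_{V_i\cap\mathbb{C}^m}(V_i)$ is a complex submanifold of $V_i\times\mathbb{P}^{n-m-1}$ of dimension $n$, carrying a canonical projection to $V_i$; (ii) verify that the biholomorphisms $\Psi^{ij}$ satisfy the cocycle condition $\Psi^{jk}\circ\Psi^{ij}=\Psi^{ik}$, so that the local pieces glue to a complex manifold $\hat X$ equipped with a globally defined holomorphic map $\sigma\colon\hat X\to X$; and (iii) determine the behaviour of $\sigma$ over $X\setminus Y$ and over $Y$.

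For (i), writing $l=[l_{m+1}:\cdots:l_n]$ for the homogeneous coordinates on $\mathbb{P}^{n-m-1}$, I would work in the affine chart $\{l_c\neq 0\}$ and use the incidence relations $z_al_b=z_bl_a$ to solve $z_a=z_cl_a$ for $a\geq m+1$. This exhibits the local model as the graph of a holomorphic map over the coordinates $(z_1,\dots,z_m,z_c,l_{m+1},\dots,\widehat{l_c},\dots,l_n)$, hence as a smooth $n$-dimensional complex submanifold. The restriction of the projection $V_i\times\mathbb{P}^{n-m-1}\to V_i$ is then the desired map: it is an isomorphism over $\{(z_{m+1},\dots,z_n)\neq 0\}$, where the incidence forces $l=[z_{m+1}:\cdots:z_n]$, and it has fiber the whole $\mathbb{P}^{n-m-1}$ over $V_i\cap\mathbb{C}^m$.

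Step (ii) is where the main difficulty lies. The key observation is that on the dense open locus complementary to the exceptional set the line $l$ is uniquely determined as $[z_{m+1}:\cdots:z_n]$, and the defining expansion $(\varphi_j\circ\varphi_i^{-1})^k=\sum_{s}x_s\varphi_{ks}^{ji}(x)$ forces the matrix action $(\varphi_{sr}^{ji})\cdot l$ to agree with $[(\varphi_j\circ\varphi_i^{-1})^{m+1}:\cdots:(\varphi_j\circ\varphi_i^{-1})^n]$. Thus on this open set the gluing of the blow-ups is completely determined by the gluing of $X$ itself; the cocycle identity $\Psi^{jk}\circ\Psi^{ij}=\Psi^{ik}$ therefore holds there by the cocycle property of the charts $(U_i,\varphi_i)$, and extends to the whole local model by continuity. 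The same computation shows that the local projections to $V_i$ intertwine with the $\Psi^{ij}$, so that they patch to a single holomorphic map $\sigma\colon\hat X\to X$, and that $\sigma$ restricts to a biholomorphism onto $X\setminus Y$, since over that locus each $\Psi^{ij}$ is merely the graph of $\varphi_j\circ\varphi_i^{-1}$. The genuinely delicate point is that the Hadamard coefficients $\varphi_{ks}^{ji}$ are not canonically determined off $Y$, yet the induced projective action is; this is precisely what the density argument above resolves, since on the complement of the exceptional set no choice is involved.

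Finally, for (iii) I would examine $\sigma^{-1}(Y)$, which in each chart equals $\{z_{m+1}=\cdots=z_n=0\}\times\mathbb{P}^{n-m-1}\cong(U_i\cap Y)\times\mathbb{P}^{n-m-1}$. On this locus the coefficient matrix $(\varphi_{sr}^{ji}(x))$ specializes at $x\in Y$ to $\bigl(\partial(\varphi_j\circ\varphi_i^{-1})^s/\partial z_r\bigr)\big|_Y$ with $s,r\geq m+1$, which is exactly the transition cocycle of the normal bundle $\mathcal{N}_{Y|X}=(TX|_Y)/TY$ in the frame $\partial/\partial z_{m+1},\dots,\partial/\partial z_n$. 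Hence the fibrewise projective spaces glue precisely as the projectivization of $\mathcal{N}_{Y|X}$, yielding the asserted isomorphism $\sigma^{-1}(Y)\cong\mathbb{P}(\mathcal{N}_{Y|X})$ over $Y$ and completing the proof.
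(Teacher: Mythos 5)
Your proposal is correct and follows essentially the same route as the paper, which states this proposition without proof (citing \cite{huy}) after sketching exactly the construction you carry out: the local incidence models in $V_i\times\mathbb{P}^{n-m-1}$, the gluing maps $\Psi^{ij}$, whose well-definedness and cocycle property you rightly reduce (by density of the complement of the exceptional set and the identity principle) to the cocycle property of the charts of $X$, and the identification of $\sigma^{-1}(Y)$ with $\mathbb{P}(\mathcal{N}_{Y|X})$ via the specialization of $(\varphi^{ji}_{sr})$ on $Y$ to the normal-bundle Jacobian cocycle. Your write-up simply supplies the standard details behind the construction the paper outlines and defers to Huybrechts.
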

\vspace{5 mm}

Let $(U_\alpha, \varphi_\alpha)$ be an atlas for $X$ with the previous properties, and let us denote $V_\alpha := \varphi_\alpha(U_\alpha)$ and $\Psi^\alpha: Bl_{V_\alpha \cap \mathbb{C}^m}(V_\alpha) \hookrightarrow{} Bl_Y(X)$. Let $pr_2: Bl_{V_\alpha \cap \mathbb{C}^m}(V_\alpha) \longrightarrow \mathbb{P}^{n-m-1}$ be the projection onto the second factor. We consider $\mathcal{O}_{\mathbb{P}^{n-m-1}}(-1)$ as the tautological line bundle, and we denote by $E_\alpha := pr_2^*\mathcal{O}_{\mathbb{P}^{n-m-1}}(-1)$. Let $h$ be the natural metric on $\mathcal{O}_{\mathbb{P}^{n-m-1}}(-1)$ (i.e., the metric such that, if we consider $e \in \mathcal{O}_{\mathbb{P}^{n-m-1}}(-1)$ and $pr(e) = [l]$, then there exists $\lambda \in \mathbb{C}$ such that $e = \lambda l$, where $l$ is a representative for $[l]$, and $h(e) = ||\lambda l||_{\mathbb{C}^{n-m-1}}$). We denote by $h' = pr_2^* h$ the metric on $E_\alpha$.

\begin{proposition}\cite[Proposition 3.25]{voi} \label{fibrat}
    There exists $E$ a vector bundle over $Bl_Y(X)$ such that $(\Psi^\alpha)^*E=E_\alpha$. Moreover:
    \begin{enumerate}
        \item \label{fibrat trivial peste blowup} $E$ is trivial over $Bl_Y(X)\setminus \sigma^{-1}(Y)$
        \item If we make the identification $\sigma^{-1}(Y) \simeq \mathbb{P}(\mathcal{N}_{Y|X})$, we have $E|_{\sigma^{-1}(Y)} \simeq \sigma^*\mathcal{N}_{Y|X}$
    \end{enumerate}
        
\end{proposition}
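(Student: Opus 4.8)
The plan is to construct $E$ by gluing the local line bundles $E_\alpha=pr_2^*\mathcal{O}_{\mathbb{P}^{n-m-1}}(-1)$ over the charts $W_\alpha:=\Psi^\alpha\big(Bl_{V_\alpha\cap\mathbb{C}^m}(V_\alpha)\big)$, which cover $Bl_Y(X)$. Since each $\Psi^\alpha$ is a biholomorphism onto its image, it suffices to produce, on every overlap $W_\alpha\cap W_\beta$, a holomorphic bundle isomorphism $\theta_{\beta\alpha}$ from $E_\alpha$ to $E_\beta$ covering the transition biholomorphism, and then to verify the cocycle condition $\theta_{\gamma\beta}\circ\theta_{\beta\alpha}=\theta_{\gamma\alpha}$. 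Granting this, the glued bundle $E$ automatically satisfies $(\Psi^\alpha)^*E=E_\alpha$ by construction, which is the first assertion.

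To define $\theta_{\beta\alpha}$, I would use that the fibre of $E_\alpha$ over a point $(x,l)$ of the local blow-up is the tautological line $\mathbb{C}\cdot l\subset\mathbb{C}^{n-m}$ in the normal coordinates indexed $m+1,\dots,n$. The transition map acts by $\Psi^{\alpha\beta}(x,l)=\big(\varphi_\beta\circ\varphi_\alpha^{-1}(x),\,g_{\beta\alpha}(x)\cdot l\big)$ with $g_{\beta\alpha}=(\varphi^{\beta\alpha}_{sr})$, and the invertible linear map $g_{\beta\alpha}(x)$ sends $\mathbb{C}\cdot l$ isomorphically onto the fibre $\mathbb{C}\cdot\big(g_{\beta\alpha}(x)\,l\big)$ of $E_\beta$ over the image point; restricting $g_{\beta\alpha}(x)$ to this line is the required $\theta_{\beta\alpha}$.

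I expect the \textbf{cocycle condition} to be the main obstacle, because the functions $\varphi^{\beta\alpha}_{sr}$ are \emph{not} uniquely determined (the identity $(\varphi_\beta\circ\varphi_\alpha^{-1})^k=\sum_s x_s\varphi^{\beta\alpha}_{ks}$ only pins them down modulo terms killed by contraction with $x$), so $g_{\gamma\beta}g_{\beta\alpha}=g_{\gamma\alpha}$ need not hold as a matrix identity. The way around this is to observe that on the tautological line the ambiguity disappears: over the dense open set $Bl_Y(X)\setminus\sigma^{-1}(Y)\simeq X\setminus Y$ one has $l\parallel z=(x_{m+1},\dots,x_n)$, so that $g_{\beta\alpha}(x)\cdot l$ is a scalar multiple of the vector with components $(\varphi_\beta\circ\varphi_\alpha^{-1})^k(x)$, $k\geq m+1$, and thus depends only on the genuine coordinate change, not on the chosen $\varphi^{\beta\alpha}_{sr}$. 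On this open set the $\theta_{\beta\alpha}$ are induced by honest coordinate Jacobians in the normal directions, so the chain rule yields the cocycle identity there; since the $\theta_{\beta\alpha}$ are holomorphic and $X\setminus Y$ is dense, the identity propagates to all of $Bl_Y(X)$ by continuity.

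Finally I would read off the two remaining properties from the same local model. Over $X\setminus Y$ the assignment $(z,[z])\mapsto z\in\mathbb{C}^{n-m}$ is a global holomorphic section of $E$ that is nowhere zero (as $z\neq0$ off the exceptional divisor) and transforms precisely by $\theta_{\beta\alpha}$, hence trivialises $E$ there, giving (1). For (2), differentiating $(\varphi_\beta\circ\varphi_\alpha^{-1})^k=\sum_s x_s\varphi^{\beta\alpha}_{ks}$ and restricting to $\{x_{m+1}=\dots=x_n=0\}$ shows $g_{\beta\alpha}|_Y=\big(\partial(\varphi_\beta\circ\varphi_\alpha^{-1})^k/\partial x_s\big)|_Y$, which is exactly the transition cocycle of $\mathcal{N}_{Y|X}$; therefore, under $\sigma^{-1}(Y)\simeq\mathbb{P}(\mathcal{N}_{Y|X})$, the fibre of $E$ over $(y,[v])$ is the tautological line $\mathbb{C}\cdot v\subset\mathcal{N}_{Y|X,\,y}$, so $E|_{\sigma^{-1}(Y)}$ is the tautological subbundle $\mathcal{O}(-1)\hookrightarrow\sigma^*\mathcal{N}_{Y|X}$, which is the content of the second assertion.
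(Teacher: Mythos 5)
Your proof is correct, and since the paper gives no argument of its own for this proposition (it is quoted directly from Voisin, \cite[Proposition 3.25]{voi}), the right comparison is with the cited source: your construction — gluing the $E_\alpha$ via the action of $g_{\beta\alpha}=(\varphi^{\beta\alpha}_{sr})$ on the tautological line, verifying the cocycle condition off the exceptional divisor where the ambiguity in the $\varphi^{\beta\alpha}_{sr}$ disappears and extending by density, trivialising with the nowhere-vanishing section $(z,[z])\mapsto z$, and identifying $g_{\beta\alpha}|_Y$ with the normal Jacobian — is essentially that standard argument. You also correctly resolved the one imprecision in the statement itself: as written, item (2) cannot hold literally for a line bundle when $\rk \mathcal{N}_{Y|X}>1$, and its intended content is exactly what you proved, namely $E|_{\sigma^{-1}(Y)}\simeq \mathcal{O}_{\mathbb{P}(\mathcal{N}_{Y|X})}(-1)\hookrightarrow \sigma^*\mathcal{N}_{Y|X}$, the tautological subbundle.
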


\begin{remark}
    \ref{fibrat}, \eqref{fibrat trivial peste blowup} implies that there exists $\eta$ a nowhere vanishing global section. We can choose $h''$ a metric on $E|_{Bl_Y(X)\setminus \sigma^{-1}(Y)}$ such that $h''(\eta)=1$. As $\sigma^{-1}(Y)$ is compact, then there exists $W_1$, $W_2$ relatively compact open sets, with $\overline W_1 \subset W_2$, such that $\sigma^{-1}(Y) \subset W_1$. We choose $\rho_1$, $\rho_2$ $\mathcal{C}^\infty$ functions such that $\rho_1=1$ on $W_1$ and $\rho_1=0$ outside $W_2$ and $\rho_2=0$ on $W_1$ and $\rho_2=1$ outside $W_2$.
\end{remark}

We define $H:=\rho_1 h'+ \rho_2 h''$ metric on $E$.

\begin{proposition} {\rm(\cite{gh})} \label{curbura}
    The curvature $\omega$ of the canonical connection associated with metric $H$ on $E$ has the following properties:
    \begin{enumerate}
        \item  $\omega$ is zero outside $W_2$;
        \item   $- \omega$ is semi-negative definite on $W_1$
        \item the restriction of it at $\sigma^{-1}(Y)$ is negative definite on vectors that are tangent to the  fibre of the bundle $\sigma^* \mathcal{N}_{Y|X} \longrightarrow \mathbb{P}(\mathcal{N}_{Y|X})$
    \end{enumerate}
\end{proposition}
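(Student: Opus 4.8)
The plan is to compute the curvature $\omega$ of the Chern connection of the Hermitian line bundle $(E,H)$ region by region, exploiting that $H=\rho_1 h'+\rho_2 h''$ reduces to a single metric on the two regions where the statement makes an assertion. Recall that for any local holomorphic frame $s$ of the line bundle $E$ the curvature of the canonical connection is $\omega=-\6\bar\6\log H(s,s)$. Since $\rho_1\equiv 1,\ \rho_2\equiv 0$ on $W_1$ and $\rho_1\equiv 0,\ \rho_2\equiv 1$ outside $W_2$, the metric $H$ equals $h'$ on $W_1$ and $h''$ outside $W_2$; hence the curvature there is exactly that of $h'$, resp.\ $h''$, and nothing has to be shown on the transition region $W_2\setminus W_1$, where no claim is made.

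For (1), I would invoke Proposition \ref{fibrat}: $E$ is (holomorphically) trivial over $Bl_Y(X)\setminus\sigma^{-1}(Y)$, so the remark following it provides a nowhere-vanishing holomorphic frame $\eta$ normalised so that $h''(\eta)=1$. Outside $W_2$ we have $H=h''$, so $H(\eta,\eta)\equiv 1$ is constant in the holomorphic frame $\eta$, and therefore $\omega=-\6\bar\6\log 1=0$ there. The only subtlety is that ``trivial'' must be read holomorphically, so that $\eta$ is a genuine holomorphic frame.

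For (2) and (3), I would work on $W_1$, where $H=h'=pr_2^*h$ is the pullback of the natural metric on $\mathcal{O}_{\mathbb{P}^{n-m-1}}(-1)$. In an affine chart with coordinate $w$ on $\mathbb{P}^{n-m-1}$, the tautological frame has squared norm $1+|w|^2$, so the curvature of the tautological bundle is $-\6\bar\6\log(1+|w|^2)=-\omega_{FS}$, minus the Fubini-Study form; pulling back gives $\omega=-pr_2^*\omega_{FS}$ on $W_1$. As $pr_2^*\omega_{FS}$ is the pullback of a positive form it is semi-positive and degenerates exactly along $\ker d(pr_2)$, which yields the semi-definiteness asserted in (2). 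For (3) I would restrict to $\sigma^{-1}(Y)\simeq\mathbb{P}(\mathcal{N}_{Y|X})$ and use that $pr_2$ maps each projective fibre isomorphically onto $\mathbb{P}^{n-m-1}$; hence $pr_2^*\omega_{FS}$ restricts to the honest Fubini-Study form on each fibre, which is positive definite, so $\omega=-pr_2^*\omega_{FS}$ is negative definite precisely on the vectors tangent to the fibres of $\mathbb{P}(\mathcal{N}_{Y|X})\to Y$.

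The whole argument is local and elementary once the frames are chosen; the essential input is the single computation that the tautological bundle carries $-\omega_{FS}$ as curvature. I expect the only genuine points requiring care to be: matching the curvature sign convention of \cite{gh} with the formulation of (2) and (3); confirming that $E|_{\sigma^{-1}(Y)}$ is the relative tautological bundle, so that $pr_2$ is a fibrewise isomorphism onto projective space and the Fubini-Study positivity survives restriction to the exceptional divisor; and verifying that $\eta$ can be chosen holomorphic, which is exactly what forces the curvature to vanish outside $W_2$.
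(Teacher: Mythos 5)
The paper offers no proof of \ref{curbura}: it is quoted from \cite{gh}. So your attempt can only be measured against the construction carried out there and in \cite{voi}. Your overall plan is the right one and is the same as in those references (curvature is local, so compute it separately where $H=h'$ and where $H=h''$), and your proof of (1) is correct, including the observation that the triviality in \ref{fibrat} must be holomorphic, so that $\eta$ is a holomorphic frame of constant norm $1$ and the curvature vanishes outside $W_2$.

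The genuine gap is in (2) and (3): you treat $pr_2$, and hence $h'=pr_2^{*}h$, as globally defined on $W_1$. It is not. The projection $pr_2$ exists only chart by chart, on $Bl_{V_\alpha\cap\mathbb{C}^m}(V_\alpha)$, and on overlaps the gluing maps $\Psi^{ij}$ act on the $\mathbb{P}^{n-m-1}$ factor through the matrices $\bigl(\varphi^{ji}_{sr}(x)\bigr)$, which are invertible but not unitary; hence the chart metrics $pr_{2}^{*}h$ do not agree, and unless $Y$ lies in a single chart (e.g.\ $Y$ a point) there is no global metric ``$pr_2^{*}h$'' near $\sigma^{-1}(Y)$. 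Your key identity $\omega=-pr_2^{*}\omega_{FS}$ on $W_1$, on which both (2) and (3) rest, is therefore not defined. (The paper's own definition of $H$ has the same flaw, but a proof must repair it, say by taking $h':=\sum_\alpha(\chi_\alpha\circ\sigma)\,pr_{2}^{*}h$ for a partition of unity $\{\chi_\alpha\}$ subordinate to $\{U_\alpha\}$.) This repair rescues (3): $\chi_\alpha\circ\sigma$ is constant on each fibre $F$ of $\sigma^{-1}(Y)\to Y$, the curvature of a restricted metric is the restriction of the curvature, and on $E|_F\cong\mathcal{O}_F(-1)$ one gets a constant-coefficient convex combination of negatively curved metrics, which is again negatively curved because $\log\sum_\alpha e^{u_\alpha}$ is strictly plurisubharmonic when the $u_\alpha$ are. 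But no repair can rescue (2), because (2) is actually false in general, for \emph{any} choice of $H$: by Chern--Weil, a semi-definite curvature on $W_1\supset\sigma^{-1}(Y)$ would force $\deg\bigl(E|_C\bigr)$ to have one sign for all compact curves $C\subset\sigma^{-1}(Y)$; yet lines inside the fibres have degree $-1$, while, for instance, for $Y$ a line in $\mathbb{P}^3$ (so $\mathcal{N}_{Y|X}=\mathcal{O}(1)^{\oplus 2}$, $\sigma^{-1}(Y)\cong\mathbb{P}^1\times\mathbb{P}^1$ and $E|_{\sigma^{-1}(Y)}\cong\mathcal{O}(1,-1)$) the horizontal curves have degree $+1$. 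What is true, what \cite{gh} and \cite{voi} actually prove, and what the paper's blow-up theorem really needs, is (1), (3) and the boundedness of $\omega$; positivity of the modified form on the blow-up then follows from the fibrewise statement (3) together with a compactness/openness argument, not from a global sign on $W_1$.

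Finally, the signs, which you defer to a ``convention check'' but which cannot be deferred. With your (standard) convention, your computation gives $\omega\le 0$ on $W_1$ and $\omega<0$ on fibre directions, i.e.\ $-\omega$ is semi-\emph{positive}; this is the opposite of the literal claim (2), yet you assert that it ``yields the semi-definiteness asserted in (2)''. The paper intends the other sign (it later uses $\tilde F=\sigma^{*}F+N\omega>0$), and that intended sign is unachievable even for point blow-ups, again by the degree $-1$ of $E$ on fibre lines. So fix a convention, prove the sign that is actually true, and note that the subsequent theorem must then be run with $\sigma^{*}F-N\omega$.
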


\medskip

\subsection{Stability of special Hermitian metrics at blow-up}

Here we investigate the stability of a special Hermitian metric with property that its fundamental $2-$form satisfies $\6 \bar \6 F^i=0$ with respect to blow-up, generalizing the result for SKT metrics  in \cite{ft}.

\begin{theorem}
    Let $X$ be a complex manifold of dimension $n$ endowed with a Hermitian metric $g$ such that, for a fixed $k>0$, the fundamental $2$-form $F$ of $g$ satisfies $\6 \bar \6 F^i=0$, for any $i=\overline{1,k}$.

    Let $Y \subset X$ be a compact complex submanifold.

    Then the blow-up $Bl_Y(X)$ of $X$ along $Y$ admits a Hermitian metric such that its fundamental $2$-form $\tilde{F}$ satisfies $\6 \bar \6 \tilde{F}^i=0$, for any $i=\overline{1,k}$ .
\end{theorem}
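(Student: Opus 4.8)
The plan is to realise the fundamental form $\tilde F$ on $\hat X := Bl_Y(X)$ as a \emph{corrected pullback} of $F$. The naive candidate $\sigma^* F$ is a real $(1,1)$-form satisfying $\6 \bar \6 (\sigma^* F)^i = \sigma^*(\6 \bar \6 F^i) = 0$ for every $i = \overline{1,k}$, so it already carries all the cohomological conditions; its only defect is that it is not a metric. Indeed, along the exceptional divisor $\sigma^{-1}(Y) = \mathbb{P}(\mathcal{N}_{Y|X})$ the differential $d\sigma$ annihilates the directions tangent to the fibres of $\mathbb{P}(\mathcal{N}_{Y|X}) \to Y$, so $\sigma^* F$ is only positive semidefinite there, with kernel exactly these vertical directions. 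I would repair this using the curvature form $\omega$ of the line bundle $(E,H)$ produced in Proposition \ref{curbura}, setting
\[
    \tilde F := \sigma^* F + \epsilon\, \omega, \qquad \epsilon > 0 \text{ small}.
\]
Since $E$ is a line bundle, $\omega$ is a real constant multiple of its first Chern form; in particular $\omega$ is a global real $(1,1)$-form with $d\omega = 0$, hence $\6\omega = \bar\6\omega = 0$. This closedness is precisely what makes the correction compatible with \emph{all} the $\6\bar\6$-conditions at once.

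First I would check that $\tilde F$ is positive definite for $\epsilon$ small, so that it is the fundamental form of a genuine Hermitian metric. Proposition \ref{curbura} splits $\hat X$ into three regimes. Outside $W_2$ one has $\omega = 0$ while $\sigma$ is a biholomorphism, so $\tilde F = \sigma^* F > 0$. On $\overline{W_1}$ one has $\omega \ge 0$ by Proposition \ref{curbura}(2), and along $\sigma^{-1}(Y)$ the form $\omega$ is strictly positive exactly on the fibre directions that constitute $\ker(\sigma^* F)$ by Proposition \ref{curbura}(3); decomposing a nonzero vector into horizontal and vertical parts then gives $(\sigma^* F + \epsilon\omega)(v,\bar v) > 0$ there for every $\epsilon > 0$, while on $\overline{W_1}\setminus\sigma^{-1}(Y)$ one still has $\sigma^* F > 0$ and $\omega \ge 0$. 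By compactness of $\sigma^{-1}(Y)$ (which uses that $Y$ is compact) this positivity persists on all of $\overline{W_1}$. Finally, on the compact transition region $\overline{W_2}\setminus W_1$ we are away from the exceptional divisor, so $\sigma^* F$ is bounded below by a positive constant while $\omega$ is bounded; choosing $\epsilon$ small enough keeps $\tilde F > 0$ there. Patching the three regimes yields $\tilde F > 0$ on $\hat X$.

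Next I would verify $\6 \bar \6 \tilde F^i = 0$ for $i = \overline{1,k}$. Expanding by the binomial theorem,
\[
    \tilde F^i = \sum_{l=0}^{i}\binom{i}{l}\epsilon^l\,(\sigma^* F)^{i-l}\wedge \omega^l .
\]
Because $\omega$ is $d$-closed, so is each $\omega^l$, and therefore $\6 \bar \6$ passes across the factor $\omega^l$:
\[
    \6 \bar \6\bigl((\sigma^* F)^{i-l}\wedge\omega^l\bigr)
    = \6 \bar \6\bigl((\sigma^* F)^{i-l}\bigr)\wedge\omega^l
    = \sigma^*\bigl(\6 \bar \6 F^{\,i-l}\bigr)\wedge\omega^l .
\]
For $0 \le l \le i \le k$ we always have $i - l \le k$, so every summand vanishes by hypothesis (for $i-l = 0$ this is the trivial statement $\6 \bar \6 \omega^l = 0$). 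Summing gives $\6 \bar \6 \tilde F^i = 0$, as required, which recovers the SKT case $k=1$ of \cite{ft}.

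The conceptual content is already concentrated in Proposition \ref{curbura}: the genuinely delicate step is the \emph{construction} of $E$ together with a metric $H$ whose curvature is closed, supported near the exceptional divisor, and strictly positive exactly on the fibre directions where $\sigma^* F$ degenerates. Granting that, the only remaining care is the positivity bookkeeping near $\sigma^{-1}(Y)$ of the second paragraph—matching $\ker(\sigma^* F)$ against the positivity locus of $\omega$ and choosing $\epsilon$ uniformly over the compact transition region. The $\6 \bar \6$-computation itself is then formal, its single ingredient being $d\omega = 0$; this is exactly why the argument extends uniformly to every power $i \le k$ at once, in contrast with the obstruction observed in \cite{sfto}.
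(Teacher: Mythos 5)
Your proposal is correct and, at its core, is the same construction as the paper's: both take $\tilde F = \sigma^*F + c\,\omega$ with $\omega$ the curvature form of the line bundle $E$ of \ref{fibrat} and \ref{curbura}, and both reduce $\6\bar\6\tilde F^i=0$ to the binomial expansion together with $d\omega=0$, so that every summand becomes $\sigma^*\left(\6\bar\6 F^{i-l}\right)\wedge\omega^l=0$ by hypothesis. Your remark that the argument covers all $i\le k$ at once also matches the paper, which writes the computation for $i=k$ but uses nothing beyond $\6\bar\6 F^l=0$ for $l\le k$.

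The one genuine divergence is the scaling of the correction term, and there your version is the more careful one. You take $\tilde F=\sigma^*F+\epsilon\,\omega$ with $\epsilon$ \emph{small}; the paper takes $\tilde F=\sigma^*F+N\omega$ with $N\gg 0$. Note that \ref{curbura} controls the sign of $\omega$ only on $W_1$ (and gives $\omega=0$ outside $W_2$), so on the transition annulus $\overline{W_2}\setminus W_1$ the form $\omega$ may well have negative directions coming from the cut-off functions in $H=\rho_1h'+\rho_2h''$; there one must let the uniformly positive $\sigma^*F$ dominate, which forces the coefficient of $\omega$ to be small, not large. The paper's proof instead asserts that $-\omega$ is semi-negative definite on every vector, a global statement that \ref{curbura} does not provide and which is what would make $N\gg0$ harmless. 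Your three-region check---any $\epsilon>0$ suffices on $W_1$ because the kernel of $\sigma^*F$ along $\sigma^{-1}(Y)$ is exactly the fibre-direction locus where $\omega$ is positive, while compactness of $\overline{W_2}\setminus W_1$ yields a uniform small $\epsilon$---is the standard positivity argument (as in Voisin and Griffiths--Harris) and fills in precisely the step the paper glosses over. Everything else in your write-up, including the formal $\6\bar\6$ bookkeeping, coincides with the paper's proof.
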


\begin{proof}
    Let $\sigma:Bl_Y(X) \longrightarrow X$. We know that $\sigma:Bl_Y(X) \setminus \sigma^{-1}(Y)\longrightarrow X \setminus Y$ is biholomorphism and $\sigma^{-1}(Y) \simeq \mathbb{P}(\mathcal{N}_{Y|X})$.

    From \ref{fibrat}, we know that there exists a holomorphic line bundle $E$ over $Bl_Y(X)$ which admits a Hermitian metric. This metric has the properties listed in  \ref{curbura} .

    We denote by $\omega$ the curvature of the canonical connection associated to this metric.
    Then $\omega$ is zero outside a compact neighborhood of $\sigma^{-1}(Y)$, $-\omega$ is semi-negative definite on any vector and $-\omega|_{\sigma^{-1}(Y)}$ is negative definite on vectors that are tangent to any fibre of $\sigma^* \mathcal{N}_{Y|X} \longrightarrow \mathbb{P}(\mathcal{N}_{Y|X})$

    As $Y$ is compact, there exists $N>>0$ such that $\tilde{F}= \sigma^*F+ N\omega$ is positive definite.

    Then,
    \begin{equation*}
        \tilde{F}^k=(\sigma^*F+N\omega)^k=\sum_{l=0}^k C_l^k (\sigma^*F)^l \wedge  (N\omega)^{k-l}.
    \end{equation*}

    Hence, 
    \begin{align*}
        \6  \bar \6 \tilde{F}^k&= \sum_{l=0}^k C_l^k \left[ \6 \bar \6 (\sigma^*F)^l \wedge (N \omega)^{k-l}- \bar \6(\sigma^*F)^l \wedge \6  (N \omega)^{k-l}\right.\\
        &+  \left.\6(\sigma^*F)^l \wedge \bar \6  (N \omega)^{k-l} + \6 (\sigma^*F)^l \wedge \6 \bar \6 (N \omega)^{k-l}\right].
    \end{align*}

    Since $\omega$ is the curvature form of a connection, it is $d$-closed. Then $\6 \omega=\bar \6 \omega=0$. Thus, the last $3$ terms of the sum reduce to zero.

    In conclusion, 
    \begin{align*}
        \6  \bar \6 \tilde{F}^k&=\sum_{l=0}^k C_l^k \6 \bar \6 (\sigma^*F)^l \wedge (N \omega)^{k-l}=\sum_{l=0}^k C_l^k N^{k-l} \6 \bar \6 (\sigma^*F)^l \wedge \omega^{k-l}\\
        &=\sum_{l=0}^k C_l^k N^{k-l} \6 \bar \6 \sigma^*F^l \wedge \omega^{k-l}
        =\sum_{l=0}^k C_l^k N^{k-l}  \sigma^* \6 \bar \6 F^l \wedge \omega^{k-l}.
    \end{align*}

    From the hypothesis, we have $\6 \bar \6 F^l=0$. Hence,  the conclusion. 
\end{proof}

\begin{corollary}
	Let $X$ be a complex manifold endowed with a special Hermitian metric. Then the blow-up along a submanifold of $X$ admits a special Hermitian metric.
\end{corollary}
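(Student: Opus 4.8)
The plan is to recognize that this corollary is nothing more than the specialization $k=n-1$ of the preceding theorem, so the entire argument reduces to matching index ranges. First I would recall the definition of \emph{special}: a Hermitian metric on a complex manifold of dimension $n=\dim_\C X$ is special exactly when its fundamental $2$-form $\omega$ satisfies $\6\bar\6\,\omega^i=0$ for every $i=\overline{1,n-1}$. The key geometric observation is that the blow-up map $\sigma\colon Bl_Y(X)\to X$ preserves the complex dimension, i.e. $\dim_\C Bl_Y(X)=\dim_\C X=n$. Consequently, the condition defining a special metric on $Bl_Y(X)$ involves precisely the same range of exponents $i=\overline{1,n-1}$ as on $X$; nothing about the admissible range changes under blow-up.

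With this in hand, I would simply invoke the preceding theorem with the fixed integer taken to be $k=n-1$. This choice is legitimate whenever $n\geq 2$ (the case $n=1$ being vacuous, as there are then no conditions to impose). Since $X$ carries a special metric with fundamental form $F$, we have $\6\bar\6\,F^i=0$ for all $i=\overline{1,n-1}$, which is exactly the hypothesis required by the theorem. Taking $Y$ to be the compact submanifold along which we blow up, as in the construction of Section~\ref{blow-up}, the theorem produces a Hermitian metric on $Bl_Y(X)$ whose fundamental $2$-form $\tilde F$ satisfies $\6\bar\6\,\tilde F^i=0$ for all $i=\overline{1,n-1}$. By the observation of the first paragraph, this is precisely the statement that the constructed metric on $Bl_Y(X)$ is special, which is the desired conclusion.

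I do not expect any genuine obstacle in this argument: all of the analytic substance has already been expended in proving the theorem. In particular, the existence of the auxiliary line bundle $E$ with the curvature form of Proposition~\ref{curbura}, the choice of a large constant $N\gg 0$ making $\tilde F=\sigma^*F+N\omega$ positive definite, and the crucial cancellation of the cross terms in $\6\bar\6\,\tilde F^i$ (which rests on the curvature form being $d$-closed, hence $\6$- and $\bar\6$-closed) are all handled there. The only content of the corollary itself is the bookkeeping remark that the defining range $\overline{1,n-1}$ of a special metric coincides with the range $\overline{1,k}$ furnished by the theorem once one sets $k=n-1$, together with the fact that blow-up leaves $n$ unchanged.
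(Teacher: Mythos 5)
Your proposal is correct and takes essentially the same route as the paper: the corollary is precisely the preceding theorem applied with $k=n-1$, since blow-up preserves the complex dimension $n$ and hence the range $i=\overline{1,n-1}$ defining a special metric is unchanged. The paper gives no separate argument for the corollary, treating it exactly as this specialization, so your bookkeeping remarks (compactness of $Y$ as in the blow-up construction, the vacuous case $n=1$) supply all that is needed.
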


\begin{remark}
The above computation for $F^k$ can be applied to any sum of two Hermitian metrics and shows that the product between a  K\"ahler manifold and a special Hermitian  manifold is special Hermitian. In particular, the problem being local, we derive that any principal torus bundle over a special Hermitian manifold is special Hermitian. This provides new examples out of Endo-Pajitnov manifolds.
\end{remark}

\hfill

\noindent{\bf Acknowledgment:} My warm thanks to Alexandra Otiman for suggesting this research topic and to Liviu Ornea and Miron Stanciu for many useful discussions.

\end{document}